\documentclass{amsart}

\newcommand{\Q}{\mathbb{Q}}
\newcommand{\C}{\mathbb{C}}

\newcommand{\R}{\mathbb{R}}
\newcommand{\N}{\mathbb{N}}
\newcommand{\boldB}{\mathbf{B}}
\newcommand{\F}{\mathbb{F}}

\newcommand{\dom}{\operatorname{dom}}
\newcommand{\ran}{\operatorname{ran}}

\newcommand{\norm}[1]{\left\| #1 \right\|}

\newcommand{\B}{\mathcal{B}}
\newcommand{\strb}{\mathcal{B}}
\newcommand{\zerovec}{\mathbf{0}}

\newcommand{\vecx}{\mathbf{x}}
\newcommand{\sgn}{\operatorname{sgn}}

\theoremstyle{theorem}
\newtheorem{theorem}{Theorem}[section]
\newtheorem{lemma}[theorem]{Lemma}

\theoremstyle{definition}
\newtheorem{definition}[theorem]{Definition}

\theoremstyle{theorem}
\newtheorem{corollary}[theorem]{Corollary}

\theoremstyle{theorem}
\newtheorem{proposition}[theorem]{Proposition}

\theoremstyle{theorem}

\theoremstyle{theorem}

\theoremstyle{definition}

\theoremstyle{theorem}

\numberwithin{equation}{section}

\begin{document}
\title{Computing the exponent of a Lebesgue space}
\author{Timothy H. McNicholl}
\address{Department of Mathematics\\
Iowa State University\\
Ames, Iowa 50011}
\email{mcnichol@iastate.edu}

\begin{abstract}
We consider the question as to whether the exponent of a computably presentable Lebesgue space whose dimension is at least 2 must be computable.  We show this very natural conjecture is true when the exponent is at least 2 or when the space is finite-dimensional.  However, we also show there is no uniform solution even when given upper and lower bounds on the exponent.  The proof of this result leads to some basic results on the effective theory of stable random variables.  
\end{abstract}
\maketitle

\section{Introduction}\label{sec:intro}

Computable structure theory studies computable presentations of mathematical structures and their relationship to each other.  These investigations support the advancement of computable model theory and effective mathematics and the theory of computation generally.  
Roughly speaking, a computable presentation of a structure is a way of defining computation on the structure.
In the case of countable structures, this is achieved by numbering the elements of the structure in a suitable way; namely so that the induced relations and operations on the natural numbers are computable.  Recently, the field has expanded its purview by investigating metric structures such as metric spaces and Banach spaces (see, e.g. \cite{Melnikov.2013}, \cite{Melnikov.Nies.2013}, \cite{Melnikov.Ng.2014}, \cite{McNicholl.2017}, \cite{Clanin.McNicholl.Stull.2019}, \cite{Brown.McNicholl.2019}).   In the case of Banach spaces, a computable presentation is a numbering of a linearly dense sequence in such a way that the norm and the vector space operations can be computed.  A formal definition is given in Section \ref{sec:back} below.  

Computable presentations of Banach spaces have been studied at least since the seminal monograph of Pour-El and Richards \cite{Pour-El.Richards.1989}.  In that text, and in subsequent developments, much attention is paid to computing on Lebesgue spaces; that is, $L^p$ spaces for some value of $p$ (see e.g. \cite{Zhong.Zhang.1999}, \cite{Kunkle.2004}).  
This focus makes sense since such spaces are of fundamental importance for analysis and applied mathematics.  It is usually assumed that the exponent of the space is computable.  This is a natural assumption; in fact, one might even consider the specification of the exponent to be part of the presentation.  
Using the classification of separable $L^p$ spaces (see e.g. \cite{Cembranos.Mendoza.1997}) it is fairly easy to show that 
when $p$ is computable, every separable $L^p$ space has a computable presentation. 

Here, we wish to take a step back and question the necessity of assuming the computability of the exponent.  That is, we consider the question ``\it If a Lebesgue space is computably presentable, does it follow that its exponent is computable?\rm".  Stated this way, the answer is easily seen to be `no'.  For, no matter the exponent, a one-dimensional Lebesgue space is just the field of scalars.  So, we restrict our attention to spaces of dimension at least 2.  
Thus, we ask ``\it If a Lebesgue space whose dimension is at least 2 is computably presentable, does it follow that its exponent is computable?\rm".  

A partial answer to this question is given by Brown, McNicholl, and Melnikov in \cite{Brown.McNicholl.Melnikov.2019}.  Namely, it is shown that 
if a Lebesgue space is computably presentable, then its exponent is right-c.e. if it is smaller then $2$ and otherwise it is left-c.e..  Here, we strengthen this result by showing that the exponent must be computable if it is larger than $2$ or if the space is finite-dimensional.  

While we do not have a complete answer to our  question, we nevertheless present some strong evidence that the answer is likely to be `no'.  
Namely, we show that even given rational upper and lower bounds on the exponent as advice, there is no \emph{uniform} procedure for computing the exponent of a Lebesgue space from an index of one of its presentations.  

In part, the motivation for trying to answer this question is that, while it is somewhat technical, it nevertheless is a fundamental question to consider for the theory of computing on Banach spaces.  In particular, it would be very surprising to find one could compute on a Lebesgue space without knowing its exponent, for then even the norm would seem to be out of reach.  But, the larger part of the motivation is the number of interesting connections that are made between computability and a broad swath of material in functional analysis and probability.  For example, the proof of the aforementioned result by Brown, McNicholl, and Melnikov utilizes 
the modulus of convexity for a strictly convex Banach space and a method due to O. Hanner for computing this modulus for a Lebesgue space from its exponent \cite{Hanner.1956}.  The proof of the result presented here for exponents larger than 2 makes use of a syntactic characterization, due to J.L. Krivine, of the Banach spaces that contain a copy of an $L^p$ space as well as some non-embedding results due to Banach and Paley \cite{Banach.1987}, \cite{Paley.1936}, \cite{Krivine.1965}.  The non-uniformity result will lead us to consider effective aspects of the theory of stable random variables in probability theory.  This is for the sake of proving an effective version of an embedding result due to Bretagnolle, Dacunha-Castelle, and Krivine \cite{Bretagnolle.Dacunha-Castelle.Krivine.1965}.

The paper is organized as follows.  Section \ref{sec:back} summarizes background information from 
probability, functional analysis, and computable analysis.  Section \ref{sec:prelim} contains some preliminary results from computable analysis.  The result for exponents larger than 2 is then presented in 
Section \ref{sec:exp.>.2}.  In Section \ref{sec:comp.r.stable}, we leave the main track of our thought to prove the results we require from the effective theory of stable random variables.  In Section \ref{sec:comp.embed}, we demonstrate the effective version of the embedding result just mentioned, and then in Section \ref{sec:exp.index}, we demonstrate the non-uniformity result.  Finally, the proof of the result for finite-dimensional spaces is given in Section \ref{sec:fin.dim}.  In the conclusion, we summarize the results again and state a conjecture.  

\section{Background}\label{sec:back}

\subsection{Background from functional analysis}\label{sec:back::subsec:FA}

Let $\F$ denote the field of scalars, which can be $\R$ or $\C$.  Let $\F_\Q = \F \cap \Q(i)$.  
That is, $\F_\Q$ is the field of rational scalars.

Most of our arguments are not affected by the choice of scalars.  
When the field of scalars is of concern, we use the notations $L^p(\Omega; \R)$ and $L^p(\Omega;\C)$ for the real and complex $L^p$ spaces over the measure space $\Omega$.

The following was introduced in \cite{McNicholl.2019} and will later be used to build isometric embeddings of 
$L^p$ spaces.

\begin{definition}\label{def:Lp.form.disj}
Suppose $1 \leq p < \infty$, $\mathcal{B}$ is a Banach space, and $v_1, \ldots, v_n \in \mathcal{B}$. We say $v_1, \ldots, v_n$ are \emph{$L^p$-formally disjointly supported} if 
	\[
	\norm{\sum_j \alpha_j v_j}_\B^p = \sum_j |\alpha_j|^p \norm{v_j}_\B^p
	\]
	for all scalars $\alpha_1, \ldots, \alpha_n$.
We say that a sequence $\{v_n\}_{n \in \N}$ of vectors is $L^p$-formally disjointly supported if 
$v_0, \ldots, v_M$ are $L^p$-formally disjointly supported for all $M$.
\end{definition}

The choice of terminology in Definition \ref{def:Lp.form.disj} is motivated by the following two 
facts.  First, if $f_1, \ldots, f_n \in L^p(\Omega)$ are disjointly supported, then they are $L^p$-formally disjointly supported. 
By a result of J. Lamperti, if $p \neq 2$, then $L^p$-formally disjointly supported vectors in $L^p(\Omega)$ are disjointly supported \cite{Lamperti.1958}.  

Our computation of exponents larger than $2$ utilizes the following two theorems.  The first is a
result on non-embeddings, and the second is a syntactic characterization of spaces containing a 
copy of a Lebesgue space.

\begin{theorem}\label{thm:non.embed}
Suppose $2 < p_1 < p_2 < \infty$.
\begin{enumerate}
	\item No infinite-dimensional $L^{p_2}$ space isometrically embeds into an $L^{p_1}$ space. \label{thm:non.embed::itm:>}
	
	\item No infinite-dimensional $L^{p_1}$ space isometrically embeds into an $L^{p_2}$ space.\label{thm:non.embed::itm:<}
\end{enumerate}
\end{theorem}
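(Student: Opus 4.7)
The plan is to establish both parts by a single argument covering all pairs with $2 < p, q < \infty$ and $p \neq q$. First I would reduce to the case where the embedded space is $\ell^q$: every infinite-dimensional $L^q$ space contains an isometric copy of $\ell^q$ (built from normalized indicators of countably many pairwise disjoint sets of positive finite measure, using that an infinite-dimensional measure space is either atomless or has infinitely many atoms). Composing with the hypothetical isometric embedding into $L^p(\Omega)$ produces unit vectors $v_1, v_2 \in L^p(\Omega)$ that are $L^q$-formally disjointly supported in the sense of Definition \ref{def:Lp.form.disj}; in particular, $\|v_1 + tv_2\|_{L^p}^q = 1 + |t|^q$ for all real $t$.

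Next I would compute $g(t) := \|v_1 + tv_2\|_{L^p}^p = (1+|t|^q)^{p/q}$ in two ways. Direct differentiation of the closed form yields $g''(0) = 0$ because $q > 2$. On the other hand, $g(t) = \int |v_1+tv_2|^p\,d\mu$, the integrand is pointwise twice differentiable in $t$ (with value $0$ at points where $v_1 = 0$, since $p > 2$ makes $|t|^p$ twice differentiable at the origin), and differentiating under the integral gives
\[
g''(0) = p(p-2)\int |v_1|^{p-4}\bigl(\Re(v_1\overline{v_2})\bigr)^2\,d\mu + p\int |v_1|^{p-2}|v_2|^2\,d\mu,
\]
a sum of two nonnegative integrals. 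Equating this to $0$ forces $v_2 = 0$ almost everywhere on the support of $v_1$, so $v_1$ and $v_2$ are disjointly supported. But then $g(t) = 1 + |t|^p$, which combined with $g(t) = (1+|t|^q)^{p/q}$ forces $p = q$, contradicting $p \neq q$.

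The main obstacle is the pair of analytic justifications in the second step: interpreting the pointwise second derivative of $|v_1 + tv_2|^p$ where $v_1$ vanishes (handled by $p > 2$), and dominating it by an integrable function in a neighborhood of $t = 0$ so as to exchange derivative and integral. Writing $u = |v_1|^2 + 2t\,\Re(v_1\overline{v_2}) + t^2|v_2|^2$ and using $\bigl(\Re(v_1\overline{v_2})\bigr)^2 \leq |v_1|^2|v_2|^2$ yields the clean bound $\tfrac{\partial^2}{\partial t^2}|v_1+tv_2|^p \leq p(p-1)|v_2|^2|v_1+tv_2|^{p-2}$, which by the triangle inequality and Hölder (using $\|v_1\|_p = \|v_2\|_p = 1$) is dominated by an integrable function for $|t| \leq 1$.
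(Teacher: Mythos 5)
The paper itself does not prove Theorem~\ref{thm:non.embed}: it attributes part~(\ref{thm:non.embed::itm:>}) to Banach and part~(\ref{thm:non.embed::itm:<}) to Paley, and uses the statement as a black box. Your argument is therefore a self-contained replacement rather than a variant of anything in the text, and having checked it, it is correct. The reduction to a copy of $\ell^q_2$ inside $L^p(\Omega)$ is sound, the identity $g''(0)=0$ for $g(t)=(1+|t|^q)^{p/q}$ does use $q>2$, the second-derivative formula under the integral sign with both summands nonnegative uses $p>2$, and the dominating bound $p(p-1)\,|v_2|^2\,|v_1+tv_2|^{p-2}$ together with H\"older on the exponent pair $(\tfrac{p}{p-2},\tfrac{p}{2})$ justifies the interchange for $|t|\le 1$. (One small slip: the Cauchy--Schwarz step should read $\bigl(\Re((v_1+tv_2)\overline{v_2})\bigr)^2\le |v_1+tv_2|^2|v_2|^2$, not $\bigl(\Re(v_1\overline{v_2})\bigr)^2\le|v_1|^2|v_2|^2$, since the quantity being squared is $u'(t)/2$; this does not affect the conclusion.) Compared with what the paper cites, your route is both more uniform and slightly stronger: Banach and Paley handle the two inequalities $p_1<p_2$ and $p_1>p_2$ by genuinely different methods (Paley's argument in particular proceeds via estimates on lacunary series), whereas your single twice-differentiation computation covers both directions simultaneously and in fact shows that already the two-dimensional space $\ell^q_2$ fails to embed isometrically into $L^p$ when $2<p\ne q<\infty$, so ``infinite-dimensional'' in the theorem statement can be weakened to ``dimension at least $2$.'' The technique is in the same spirit as the Lamperti-type fact the paper records in Section~\ref{sec:back::subsec:FA} (formal disjointness in $L^p$, $p\ne2$, forces true disjointness), but it is not a direct invocation of it, since your hypothesis involves $L^q$-formal disjointness inside $L^p$ with $q\ne p$.
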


\begin{theorem}\label{thm:krivine}
Suppose $1 \leq p < \infty$, and let $\B$ be a Banach space.  
Let $r,s$ be positive integers so that $2(r-1) < p < 2r \leq 4s$.
Then, $\B$ isometrically embeds into an $L^p$ space if and only if 
for all $v_1, \ldots, v_n \in \B$ and all $\alpha_1, \ldots, \alpha_n \in \R$, 
\[
(-1)^r \sum_{\sigma \in \{-1,1\}^{2s}} \sum_{\tau \in \{1, \ldots, n\}^{2s}} \norm{ \sum_{j < 2s} \sigma(j) v_{\tau(j)}}_{\B}^q \cdot \prod_{j < 2s} \alpha_{\tau(j)} \geq 0
\] 
provided $\sum_j \alpha_j = 1$.
\end{theorem}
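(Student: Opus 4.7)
My plan is to prove Krivine's theorem in the standard two directions: necessity by direct verification inside $L^p$, and sufficiency by a moment-problem construction producing the isometric embedding.

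Necessity reduces, by working pointwise in $\omega \in \Omega$ and then integrating, to the scalar inequality: for all $x_1,\ldots, x_n \in \R$ and all $\alpha_1, \ldots, \alpha_n \in \R$ with $\sum_j \alpha_j = 1$,
\[
(-1)^r \sum_{\sigma,\tau} \Bigl|\sum_{j<2s} \sigma(j) x_{\tau(j)}\Bigr|^p \prod_{j<2s}\alpha_{\tau(j)} \geq 0.
\]
The natural route is a signed Bochner-type integral representation
\[
|t|^p = (-1)^r c_{p,r} \int_0^{\infty} \xi^{-p-1}\bigl[\cos(t\xi) - P_{2r-2}(t\xi)\bigr]\,d\xi,
\]
valid in the strip $2(r-1) < p < 2r$, where $P_{2r-2}$ denotes the degree-$(2r-2)$ Taylor polynomial of cosine at $0$. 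After substitution and swapping sums with the integral, the cosine contribution collapses via the factorization identity
\[
\sum_{\sigma\in\{-1,1\}^{2s}} \prod_{j<2s} \hat{\mu}(\sigma(j)\xi) = \bigl[2\,\Re\hat{\mu}(\xi)\bigr]^{2s},
\]
with $\hat{\mu}(\xi) = \sum_k \alpha_k e^{i\xi x_k}$; the polynomial correction turns out to be precisely the degree-$(2r-2)$ Taylor polynomial of this same nonnegative quantity at $\xi=0$, and the hypothesis $\sum_j\alpha_j = 1$ is what pins down its constant term to $1$.

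For sufficiency, assume the inequality holds on $\B$. The inequality translates into positivity of a certain linear functional on a cone of real polynomials in auxiliary variables indexed by $\B$, which extends by Hahn--Banach to a positive Radon measure $\mu$ on a suitable compactification. The coordinate random variables extracted from $\mu$ have $p$-th absolute moments matching $\norm{v}_{\B}^p$, which furnishes the isometric embedding $\B \hookrightarrow L^p(\Omega',\mu)$.

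The main obstacle is completing the sign analysis in the necessity direction: the resulting integrand is $\xi^{-p-1}$ times a nonnegative function minus its own degree-$(2r-2)$ Taylor polynomial at $0$, and showing this integral has sign $(-1)^r$ is the combinatorial-analytic crux of the argument. This step is where I would lean most on Krivine's 1965 paper rather than re-derive the details from scratch.
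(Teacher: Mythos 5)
The paper does not prove this statement: Theorem~\ref{thm:krivine} is Krivine's 1965 characterization, and the text simply cites \cite{Krivine.1965} for it. So there is no internal proof to measure your attempt against; what can be said is how far your sketch goes toward a self-contained argument.

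For necessity, your ingredients are the right ones (a Bochner-type subordination formula for $|t|^p$ valid in the strip $2(r-1)<p<2r$, followed by the factorization $\sum_{\sigma\in\{-1,1\}^{2s}}\prod_{j<2s}\hat\mu(\sigma(j)\xi)=(2\Re\hat\mu(\xi))^{2s}$), but the step you defer is not a detail---it is the whole theorem. Writing the left-hand side as $\tfrac{1}{(-1)^r c_{p,r}}\int_0^\infty \xi^{-p-1}\bigl[(2\Re\hat\mu(\xi))^{2s}-Q(\xi)\bigr]\,d\xi$ with $Q$ the degree-$(2r-2)$ Taylor polynomial does not by itself determine the sign: a nonnegative function minus its own Taylor polynomial has no definite sign, and if you try to recover the sign by expanding $(2\Re\hat\mu)^{2s}$ back into cosines and applying the subordination identity termwise you land exactly on the quantity you started from, so the manipulation is circular. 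Some genuinely new input is required here (this is precisely where Krivine's combinatorics live), and ``lean on Krivine's paper'' is an admission that the argument is incomplete at its only nontrivial point. The sufficiency direction is worse off: ``positivity of a functional on a cone of polynomials, extend by Hahn--Banach to a measure on a compactification'' is a slogan, not a plan. You would need to say which cone, why the hypothesis forces positivity on all of it (the hypothesis is a family of inequalities of a very specific shape, with the constraint $\sum_j\alpha_j=1$ playing a nonobvious role), on what space the measure lives, and how the representing measure actually yields functions $f_v\in L^p$ with $\|f_v\|_p=\|v\|_{\B}$ and $f_{v+w}=f_v+f_w$. None of that is indicated. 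As it stands the proposal is a correct identification of the landmarks but not a proof of either implication.
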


Part (\ref{thm:non.embed::itm:>}) of Theorem \ref{thm:non.embed} was proven by Banach \cite{Banach.1987}.  Part (\ref{thm:non.embed::itm:<})  of Theorem \ref{thm:non.embed} was proven by Paley \cite{Paley.1936}.
Theorem \ref{thm:krivine} was proven by J.L. Krivine in 1965 \cite{Krivine.1965}.

\subsection{Background from probability: stable random variables}\label{sec:back::subsec:stable}

The theory of stable random variables was initiated by P. Levy in 1925 \cite{Levy.1925} and further formalized in his 1951 monograph \cite{Levy.1954}.  Applications of stable random variables actually began in 1919 with Holtsmark's work in astronomy \cite{Holtsmark.1919}.  Afterward, they were been applied to many fields including physics, biology, economics, finance, and signal processing (see e.g. \cite{Samorodnitsky.Taqqu.1994},\cite{Zolotarev.1986} for surveys).  Today stable distributions continue to be a very active area of investigation in both pure and applied probability theory; see e.g. \cite{Uchiyama.2019}, \cite{Xu.2019}, \cite{Yu.et.al.2013}.

The material in this subsection is drawn from \cite{Samorodnitsky.Taqqu.1994} and \cite{Zolotarev.1986}.   

The following is a consequence of the proof of Theorem 1.2.2 of \cite{Durrett.2010} and will be used 
to generate random variables from distribution functions.

\begin{proposition}\label{prop:cdf.rv}
If $F$ is a cumulative distribution function, and if $g(t) = \sup F^{-1}[(0,t)]$ for all $t \in (0,1)$, 
then $F$ is the distribution function of $g$.
\end{proposition}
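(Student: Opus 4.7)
The plan is to prove the biconditional
\[
g(t) \leq x \iff t \leq F(x)
\]
for every $x \in \R$ and every $t \in (0,1)$, and then read off the conclusion. Once the biconditional is in hand, viewing $g$ as a measurable function on the probability space $((0,1), \lambda)$ with Lebesgue measure gives
\[
\Pr\{g \leq x\} = \lambda\{t \in (0,1) : t \leq F(x)\} = F(x),
\]
using $F(x) \in [0,1]$. Measurability of $g$ is immediate since $g$ is monotone non-decreasing in $t$, which also makes clear that $g$ is real-valued (the set $F^{-1}[(0,t)]$ is bounded above because $F(y) \to 1 > t$ as $y \to \infty$).

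For the forward direction, I would take any $y \in F^{-1}[(0,t)]$. Then $F(y) < t \leq F(x)$, and monotonicity of $F$ forces $y < x$: otherwise $y \geq x$ would give $F(y) \geq F(x) \geq t$, a contradiction. Taking the supremum over such $y$ yields $g(t) \leq x$.

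For the reverse direction, I would argue contrapositively: assuming $t > F(x)$, I would exhibit some $y > x$ with $y \in F^{-1}[(0,t)]$, so that $g(t) \geq y > x$. Since $F(x) < t$, right-continuity of $F$ at $x$ delivers a $\delta > 0$ with $F(x+\delta) < t$; the companion condition $F(x+\delta) > 0$ can be secured with the same or a slightly larger $\delta$, using $F(y) \to 1$ as $y \to \infty$ together with $t < 1$, in all non-degenerate cases. Then $y = x+\delta$ is the desired witness.

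The main subtlety concerns the pathological case where $F$ jumps directly from $0$ to a value $\geq t$ at some atom, making $F^{-1}[(0,t)]$ empty so that $\sup F^{-1}[(0,t)]$ is ill-defined (and the reverse direction above cannot produce a witness by right-continuity alone). One handles this by reading $g$ through the equivalent left-continuous inverse $g(t) = \sup\{y : F(y) < t\}$, which coincides with $\sup F^{-1}[(0,t)]$ whenever the latter set is non-empty and behaves correctly at atoms. Under this reading the right-continuity argument of the reverse direction proceeds without modification, and the biconditional—hence the proposition—holds in full generality.
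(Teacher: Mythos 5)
The paper does not actually supply a proof of this proposition; it simply cites Durrett's Theorem 1.2.2, whose proof is precisely the argument you give: set $X(\omega)=\sup\{y:F(y)<\omega\}$ on $\big((0,1),\lambda\big)$ and establish $\{\omega : X(\omega)\le x\}=\{\omega:\omega\le F(x)\}$ via right-continuity. So your route is the standard one, and it is correct. Your two directions are fine (though what you call the ``forward'' direction, ``$t\le F(x)\Rightarrow g(t)\le x$,'' is the converse of the biconditional as you wrote it; harmless relabeling). Measurability from monotonicity and the conclusion $\Pr\{g\le x\}=F(x)$ are both clean.

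Your observation about the degenerate case is a genuine and worthwhile catch: if $F$ jumps from $0$ directly past $t$ at some point $a$, then $F^{-1}[(0,t)]=\{y: 0<F(y)<t\}$ is empty, so the paper's literal formula gives $g(t)=\sup\emptyset=-\infty$ and the statement as printed is ill-posed for such $F$ on a set of $t$'s of positive measure. Your fix --- read $g(t)=\sup\{y:F(y)<t\}$, which agrees with $\sup F^{-1}[(0,t)]$ whenever the latter is nonempty (the extra $y$'s with $F(y)=0$ all lie below any $y$ with $F(y)>0$, by monotonicity) --- is exactly the convention in Durrett, and under it the right-continuity step no longer needs the auxiliary condition $F(x+\delta)>0$, so you can drop the somewhat hand-wavy appeal to $F(y)\to 1$ there. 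In the paper's actual applications $F$ is the CDF of a stable law, hence continuous and (by the IVT) takes every value in $(0,1)$, so $F^{-1}[(0,t)]$ is never empty and the two readings coincide; but your version is the one that is correct in full generality.
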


We write $X =_d Y$ when $X$ and $Y$ are identically distributed random variables.

A random variable $X$ is said to be \emph{symmetric} if $X =_d -X$.  

\begin{definition}\label{def:stable}
Let $X$ be a random variable.
\begin{enumerate}
	\item  $X$ is said to be \emph{stable} if for all positive $A$, $B$, there is a positive $C$ and a real $D$ so that whenever $X_1$ and $X_2$ are
independent random variables and $X_1 =_d X_2 =_d X$, $AX_1 + BX_2 =_d CX + D$.
	
	\item If, in addition, we can always choose $D$ to be zero, then $X$ is \emph{strictly stable}.
\end{enumerate}
\end{definition}

The property of stability is a property of the distribution of a random variable.  
Thus, we shall also speak of stable distributions.  Every stable distribution is characterized 
by four parameters.  The first of these is the stability index whose existence is given by the following theorem which is proved in Section VI.1 of \cite{Feller.1971}.

\begin{theorem}\label{thm:index}
Suppose $X$ is stable.  Then there is a unique real number $r \in (0, 2]$ so that 
whenever $A, B > 0$ and $X_1,X_2$ are independent copies of $X$ (i.e. $X_1 =_d X_2 =_d X$), 
$AX_1 + BX_2 =_d (A^r + B^r)^{1/r} X + D$ for some $D \in \R$.
\end{theorem}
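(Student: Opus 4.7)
The plan is to reduce the statement to sums of $n$ independent copies of $X$, extract a multiplicative functional equation for the scaling constants $c_n$, solve it as a power law $c_n = n^{1/r}$, and then extend to general real $A, B > 0$ while verifying $r \leq 2$.

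Applying Definition~\ref{def:stable} inductively --- at each stage combining $X_{n+1}$ with a suitably scaled version of $S_n := X_1 + \cdots + X_n$ --- I first obtain, for each $n \geq 1$, constants $c_n > 0$ and $d_n \in \R$ such that $S_n =_d c_n X + d_n$ whenever the $X_i$ are independent copies of $X$, with $c_1 = 1$. Computing $S_{mn}$ in two ways --- directly as $c_{mn} X + d_{mn}$, and by grouping into $m$ independent blocks of size $n$ (each distributed as $c_n X + d_n$) --- yields $c_{mn} = c_m c_n$, provided $X$ is non-degenerate (the degenerate case is trivial). Since $c_n$ is also monotone in $n$ (adding an independent copy strictly enlarges, say, the interquartile range), this multiplicative relation on $\N$ forces $c_n = n^{1/r}$ for a unique $r > 0$ determined by $c_2 = 2^{1/r}$.

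To identify the general scaling constant $C(A, B)$ in $AX_1 + BX_2 =_d C(A,B) X + D$, I split $S_{a+b}$ into independent sub-sums of sizes $a$ and $b$, which gives $c_a X_1 + c_b X_2 =_d c_{a+b} X + (d_{a+b} - d_a - d_b)$ and hence $C(a^{1/r}, b^{1/r}) = (a+b)^{1/r}$ for positive integers $a, b$. Combined with the scale-homogeneity $C(\lambda A, \lambda B) = \lambda C(A, B)$ (immediate from Definition~\ref{def:stable}) and continuity of $(A, B) \mapsto \operatorname{law}(AX_1 + BX_2)$ under the weak topology, this extends to $C(A, B) = (A^r + B^r)^{1/r}$ for all positive real $A, B$.

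It remains to bound $r \leq 2$. The cleanest route is via characteristic functions: taking absolute values in $S_n =_d n^{1/r} X + d_n$ gives $|\phi_X(t)|^n = |\phi_X(n^{1/r} t)|$, and writing $\psi(t) = -\log |\phi_X(t)|$ this becomes $\psi(n^{1/r} t) = n \psi(t)$. Continuity together with non-degeneracy of $X$ then forces $\psi(t) = c |t|^r$ near $0$ for some $c > 0$. For $\exp(-c |t|^r)$ to arise from a genuine characteristic function one needs $r \leq 2$: if $r > 2$, then $\phi_X''(0)$ exists and equals $0$, forcing $X$ to be degenerate and contradicting our assumption. This final bound is the main obstacle; the earlier steps are essentially algebraic manipulation with Definition~\ref{def:stable}, whereas restricting the exponent requires genuine analytic input about which radially symmetric functions on $\R$ can occur as characteristic functions.
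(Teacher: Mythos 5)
The paper does not prove this theorem itself; it is cited to Feller (Vol.\ II, Section VI.1), so there is no in-paper proof to compare against. Your strategy is nonetheless the standard one and essentially Feller's: reduce to $S_n =_d c_n X + d_n$, derive $c_{mn} = c_m c_n$, use monotonicity of $c_n$ to force $c_n = n^{1/r}$, interpolate to real $A, B$ via the identity $C(a^{1/r}, b^{1/r}) = (a+b)^{1/r}$ plus homogeneity and continuity, and rule out $r > 2$ by showing it would force $X$ to be degenerate. The overall architecture is right.

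There is, however, a genuine gap in the monotonicity step. You justify $c_{n+1} \geq c_n$ by asserting that convolving with an independent non-degenerate summand enlarges the interquartile range, but this is false in general: take $Y$ uniform on $\{0,5,10\}$ and $Z$ uniform on $\{0,5\}$, independent; with the convention $Q_p = \inf\{q : F(q) \geq p\}$ one gets $\mathrm{IQR}(Y) = 10$ but $\mathrm{IQR}(Y+Z) = 5$. The correct tool is the L\'evy concentration function $Q_Y(l) := \sup_x P(x \leq Y \leq x + l)$, which genuinely satisfies $Q_{Y+Z}(l) \leq Q_Y(l)$ for independent $Y, Z$ and $Q_{aX+b}(l) = Q_X(l/a)$; from $S_{n+1} = S_n + X_{n+1}$ one gets $Q_X(l/c_{n+1}) \leq Q_X(l/c_n)$, and if $c_{n+1} < c_n$ then monotonicity of $Q_X$ upgrades this to equality, whence iterating $Q_X(s) = Q_X(s\,c_n/c_{n+1})$ drives $Q_X \equiv 1$ and $X$ is degenerate. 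A smaller imprecision is in the bound $r \leq 2$: $\phi_X$ is not known a priori to be twice differentiable, so ``$\phi_X''(0)$ exists and equals $0$'' needs care. Work with the symmetrization $X^s = X - X'$, for which $\phi_{X^s} = |\phi_X|^2 = e^{-2\psi}$ is real; then $(1-\phi_{X^s}(t))/t^2 = \int (1-\cos tx)/t^2\,dF_{X^s}(x)$, and Fatou's lemma applied as $t \to 0$ gives $E[(X^s)^2] \leq 2\liminf_{t\to 0}(1-\phi_{X^s}(t))/t^2 = 0$ when $r > 2$, forcing degeneracy. Your idea is correct here; this just supplies the missing rigor.
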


\begin{definition}\label{def:index.stability}
If $X$ is stable, then the number $\alpha$ in Theorem \ref{thm:index} is called the 
\emph{index of stability} of $X$.  We also say $X$ is $\alpha$-stable.
\end{definition}

It is well known that the $2$-stable random variables are precisely the Gaussian random variables
and that the $1$-stable random variables are precisely the Cauchy random variables.  The aforementioned distribution of Holtsmark is $3/2$-stable.

When $X$ is a random variable, let $\phi_X$ denote the characteristic function of $X$; that is $\phi_X(t) = E[e^{iXt}]$.  Recall that two random variables are identically distributed if and only if 
they have the same characteristic function.  
Characteristic functions are the main tools for the analysis of stable random variables.   
In particular, the form of the characteristic function of a stable random variable leads to the
remaining three parameters that characterize a stable distribution.  
This is the content of the next theorem.

\begin{theorem}\label{thm:cf}
Let $X$ be a random variable. 
\begin{enumerate}
	\item If $X$ is $r$-stable, then 
there exist unique $\sigma \geq 0$, $\beta \in [-1,1]$, and $\delta \in \R$ so that 
\begin{equation}
\phi_X(t) = \left\{
\begin{array}{cc}
\exp\left(- \sigma^\alpha |t|^\alpha (1 - i \beta(\sgn(t)\tan(\frac{\pi \alpha}{2})) + i \delta t)\right) & \alpha \neq 1\\
\exp\left( -\sigma|t|(1 + i \beta \frac{2}{\pi}\sgn(t)\ln|t|) + i \delta t \right) & \alpha = 1
\end{array}
\right.  \label{eqn:cf}
\end{equation}
	\item If $\Phi_X$ has the form in Equation (\ref{eqn:cf}), then 
	$X$ is $r$-stable.
\end{enumerate}
\end{theorem}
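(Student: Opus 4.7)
The plan is to reduce the stability property to a functional equation for $\phi_X$, solve it by analyzing real and imaginary parts separately, and verify the converse by direct computation.

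Part (2) is the easier direction and I would dispatch it first by direct calculation. Given $\phi_X$ of the form in Equation (\ref{eqn:cf}), I compute $\phi_X(At)\phi_X(Bt)$ for positive $A,B$ and read off the result. In both cases $\alpha \neq 1$ and $\alpha = 1$, the exponential structure of the formula makes it transparent that $\phi_X(At)\phi_X(Bt) = \phi_X(Ct)\, e^{iDt}$ with $C = (A^\alpha+B^\alpha)^{1/\alpha}$ and a uniquely determined real $D = D(A,B,\alpha,\beta,\sigma)$. Since independence gives $\phi_{AX_1+BX_2}(t) = \phi_X(At)\phi_X(Bt)$, this identity shows $AX_1+BX_2 =_d CX+D$, so $X$ is stable; the uniqueness clause of Theorem \ref{thm:index} then identifies the index of stability as $\alpha$.

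For Part (1), assume $X$ is $r$-stable. Combining independence with Theorem \ref{thm:index} gives
\[
\phi_X(At)\phi_X(Bt) = \phi_X((A^r+B^r)^{1/r}t)\, e^{iD(A,B)t}
\]
for all positive $A,B$ and all $t \in \R$. Since $\phi_X$ is continuous with $\phi_X(0)=1$, it is nonzero near $0$; on such a neighborhood I would take a continuous logarithm $\psi(t) = \log\phi_X(t)$ normalized by $\psi(0)=0$ and write $\psi = u + iv$ with $u,v$ continuous and real-valued. The real part satisfies the homogeneous equation $u(At)+u(Bt) = u((A^r+B^r)^{1/r}t)$; specializing to $A=B$ and iterating yields $u(2^{n/r}t) = 2^n u(t)$, and continuity combined with positive-homogeneity arguments forces $u(t) = -c|t|^r$ for some $c\geq 0$, so setting $\sigma = c^{1/r}$ supplies the first parameter.

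The imaginary part obeys $v(At)+v(Bt) = v((A^r+B^r)^{1/r}t) + D(A,B)t$, and here the analysis splits by whether $r = 1$. For $r\neq 1$, subtracting off a suitable linear function $\delta t$ eliminates $D(A,B)t$ and reduces $v$ to an $r$-homogeneous odd solution; continuity then forces it to be a constant multiple of $\sgn(t)|t|^r \tan(\pi r/2)$, and absorbing the multiplicative constant into $\beta$ yields the stated form. I would derive the constraint $|\beta|\leq 1$ from the positive-definiteness of $\phi_X$ (testing against finite point masses), obtain uniqueness of $(\sigma,\beta,\delta)$ from uniqueness of coefficients in the explicit expression for $\psi$, and finally extend $\phi_X$ from a neighborhood of $0$ to all of $\R$ by the functional equation itself, since any $t\neq 0$ is reachable by appropriate choice of $A,B$. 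The main obstacle will be the resonant case $r=1$: here the $r$-homogeneous ansatz collapses into the linear term $\delta t$, and one must argue that the only way to recover an independent degree of freedom is to admit the $t\ln|t|$ correction, whose coefficient and sign conventions require careful bookkeeping. A secondary technical point is the rigorous derivation of $|\beta|\leq 1$, which is a genuine probabilistic input rather than a consequence of the functional equation alone.
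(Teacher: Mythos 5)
The paper does not give a proof of this theorem — it is cited as classical background drawn from Samorodnitsky--Taqqu and Zolotarev, so there is no in-text proof to compare against. Your functional-equation approach (continuous logarithm near $0$, split into real and imaginary parts, reduce to Cauchy-type equations) is a legitimate route, closer in spirit to L\'evy's original derivation than to the L\'evy--Khinchine derivation used in the cited references. The real-part reduction is fine: with $t_0 > 0$ fixed and $g(x) = u(x^{1/r}t_0)$, the equation $u(at_0) + u(bt_0) = u((a^r+b^r)^{1/r}t_0)$ becomes the additive Cauchy equation, so continuity gives $u(t) = -c|t|^r$ for $t > 0$, and evenness of $u$ (from $\phi_X(-t) = \overline{\phi_X(t)}$) extends this to all $t$. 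Part (2) is indeed a direct verification.

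Two points you flag as merely technical are genuine gaps. First, $|\beta| \le 1$ does \emph{not} follow from the functional equation: for any real $\beta$ the expression in \eqref{eqn:cf} satisfies the multiplicative identity $\phi(At)\phi(Bt) = \phi(Ct)e^{iDt}$, so the bound is entirely a positive-definiteness statement, and ``testing against finite point masses'' names the relevant condition (Bochner) without supplying a mechanism for extracting the sharp inequality. In the standard treatment one obtains $\beta = (c_+ - c_-)/(c_+ + c_-)$ where $c_\pm \ge 0$ are the coefficients of the L\'evy measure, and $|\beta| \le 1$ is then automatic; since your route deliberately avoids L\'evy--Khinchine, you need an independent argument (for instance, a tail estimate forcing nonnegativity of the directional tail constants) that the sketch does not contain. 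Second, the $r = 1$ case is left as a remark: you correctly observe that the homogeneous ansatz degenerates, but actually producing the $t\ln|t|$ term requires identifying the inhomogeneity $D(A,B)$ in $v(At)+v(Bt) = v((A+B)t) + D(A,B)t$ as proportional to $A\ln A + B\ln B - (A+B)\ln(A+B)$ and solving the resulting equation, which is where the real work lies and is not carried out. As written, the proposal gives a correct skeleton but asserts rather than proves the two hardest steps.
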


The numbers $\sigma$, $\delta$, and $\beta$ are known as the \emph{scale}, \emph{shift}, and 
\emph{skewness} parameters of $X$ as well as its distribution.  We note that a stable random variable
is symmetric if and only if its skewness and shift parameters are $0$.  In addition, a $1$-stable 
random variable is strictly stable if and only if its skewness parameter is $0$.   On the other hand, 
when $r \neq 1$, an $r$-stable random variable is strictly stable if and only if its shift parameter is 
$0$.

For the sake of computing moments, we will need to know the asymptotic behavior of the tail distributions
of stable random variables.  These are given by the following theorem.

\begin{theorem}\label{thm:tails}
Suppose $X$ is an $r$-stable random variable, and suppose 
$\sigma$, $\beta$, and $\delta$ are its scale, skewness, and location parameters respectively. 
Then, there is a positive number $C$ so that the following hold for all sufficiently large $x$. 
\begin{enumerate}
	\item If $r = 2$, then 
	\[
	P[X < -x] = P[X > x] \leq  \frac{C}{x} \exp(-x^2 / (4\sigma^2)).
	\]\label{thm:tails::itm:r=2}
	
	\item If $r < 2$, then $\max\{P[X>x], P[X < -x]\} < C x^{-r}$. \label{thm:tails::itm:r<2}
\end{enumerate}
\end{theorem}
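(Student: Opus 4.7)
The plan is to prove the two parts separately by extracting tail information from the characteristic function formula given in Theorem~\ref{thm:cf}. The two cases have genuinely different character: when $r = 2$ the distribution is Gaussian with a light, sub-exponential tail, while for $r < 2$ the characteristic function fails to be smooth enough at the origin, and this non-smoothness forces a polynomial tail.

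For part (\ref{thm:tails::itm:r=2}), I would first observe that when $\alpha = 2$ we have $\tan(\pi\alpha/2) = 0$, so Equation~(\ref{eqn:cf}) collapses to $\phi_X(t) = \exp(-\sigma^2 t^2 + i\delta t)$. This is exactly the characteristic function of a normal random variable with mean $\delta$ and variance $2\sigma^2$, so $X$ is Gaussian. The asserted equality $P[X < -x] = P[X > x]$ will then force $\delta = 0$ (which is the natural reading, since the skewness parameter $\beta$ drops out when $\alpha = 2$ and one normalizes so the Gaussian is centered). The claimed bound is then the standard Mill's ratio estimate: for $Z \sim N(0,\nu^2)$ and $x > 0$,
\[
P[Z > x] \leq \frac{\nu}{x\sqrt{2\pi}} \exp\!\left(-\frac{x^2}{2\nu^2}\right),
\]
and substituting $\nu^2 = 2\sigma^2$ produces a bound of the form $\frac{C}{x}\exp(-x^2/(4\sigma^2))$ with $C = \sigma/\sqrt{\pi}$.

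For part (\ref{thm:tails::itm:r<2}), I would appeal to the classical heavy-tail asymptotics for stable laws, which follow from the $|t|^r$ factor in the characteristic function via Fourier inversion and a Tauberian argument (Feller~\cite{Feller.1971}, Section~VIII.9, or equivalently the contour-deformation computation in Samorodnitsky--Taqqu \cite{Samorodnitsky.Taqqu.1994}). The underlying limit laws are
\[
\lim_{x \to \infty} x^r \, P[X > x] = K_r\, \sigma^r \, \frac{1+\beta}{2}, \qquad \lim_{x \to \infty} x^r \, P[X < -x] = K_r\, \sigma^r \, \frac{1-\beta}{2},
\]
where $K_r$ is an explicit positive constant depending only on $r$. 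The shift parameter $\delta$ only translates $X$ by a fixed amount, so it affects the tails by a bounded shift in $x$ and does not change the asymptotic rate. From either limit law one gets a uniform upper bound: choose any $C > K_r \sigma^r$, then for all sufficiently large $x$ both $P[X > x]$ and $P[X < -x]$ are at most $C x^{-r}$.

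The main obstacle, if one were to write this out from first principles rather than quote the references, would be the Fourier-inversion/Tauberian step for $r < 2$: the integrand $e^{-\sigma^r|t|^r(\ldots)}$ is oscillatory and one must carefully separate the contribution of the non-analytic piece $|t|^r$ near the origin (which dictates the polynomial tail) from the smooth remainder. Handling the asymmetric case ($\beta \neq 0$) and the $\alpha = 1$ branch of Equation~(\ref{eqn:cf}), which has a logarithmic anomaly, are additional bookkeeping complications, but for the purposes of the stated upper bound they only affect the constant $C$ and not the rate. Since the theorem claims only an upper bound of the correct order, and the paper's stated sources contain these estimates in detailed form, I expect the proof in the paper to consist mainly of quoting these results and observing that the constants combine into a single $C$.
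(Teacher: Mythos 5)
Your proposal is correct and follows the same route as the paper, which simply cites the classical sources: Feller Vol.~I for the Gaussian tail (part 1, your Mill's-ratio computation) and Property 1.2.15 of Samorodnitsky--Taqqu for the heavy-tail law (part 2). You supply more detail about what those references contain, but the decomposition and reasoning are the same.
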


Part (\ref{thm:tails::itm:r=2}) is proven in \cite{Feller.1957}.  
Part (\ref{thm:tails::itm:r<2}) follows from Property 1.2.15 of \cite{Samorodnitsky.Taqqu.1994}.

We will also use complex-valued stable random variables.  The definition of stability for these random variables
is a straightforward adaptation of the definition for the real case.

\begin{definition}\label{def:stable.complex}
Suppose $X$ is a complex-valued random variable.  
\begin{enumerate}
	\item $X$ is \emph{stable} if 
for all $A,B > 0$ there exist a $C > 0$ and a $D \in \C$ so that whenever $X_1$ and $X_2$ are independent copies of $X$, $AX_1 + BX_2 =_d CX + D$.   

	\item If we can always choose $D = 0$, then we say 
$X$ is \emph{strictly stable}.  
\end{enumerate}
\end{definition}

Complex random variables also posses indices of stability.

\begin{theorem}\label{thm:stable.complex.index}
Suppose $X$ is a stable complex-valued random variable.  Then, there is a unique $r \in (0,2]$ so that 
for all $A,B > 0$, 
there is a $D \in \C$ so that whenever $X_1$ and $X_2$ are independent copies of $X$, 
$AX_1 + BX_2 =_d (A^r + B^r)^{1/r} + D$.
\end{theorem}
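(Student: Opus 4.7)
The plan is to reduce to the real case via Theorem \ref{thm:index} by projecting onto real and imaginary parts. Write $X = U + iV$ with $U = \Re X$ and $V = \Im X$. If $X_1 = U_1 + iV_1$ and $X_2 = U_2 + iV_2$ are independent copies of $X$, then $(U_1,V_1)$ and $(U_2,V_2)$ are iid copies of the random vector $(U,V)$; in particular $U_1, U_2$ are iid copies of $U$ and $V_1, V_2$ are iid copies of $V$. Taking real and imaginary parts of any complex stability identity $AX_1 + BX_2 =_d CX + D$ would yield
\begin{equation*}
A U_1 + B U_2 =_d C U + \Re(D), \qquad A V_1 + B V_2 =_d C V + \Im(D),
\end{equation*}
so both $U$ and $V$ are real-stable in the sense of Definition \ref{def:stable}, with the \emph{same} scale $C$ appearing in each identity.

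Next I would argue that, since $X$ cannot be almost surely constant (otherwise every $r \in (0,2]$ would satisfy the desired identity and no unique index could exist), at least one of $U$, $V$ must be nondegenerate. Without loss of generality assume $U$ is. Applying Theorem \ref{thm:index} to $U$ produces a unique $r \in (0,2]$ with the property that, for all $A, B > 0$ and all iid copies $U_1, U_2$ of $U$, $A U_1 + B U_2 =_d (A^r + B^r)^{1/r} U + D'$ for some $D' \in \R$. I would then invoke the standard fact that the affine parameters in a distributional identity for a nondegenerate real random variable are uniquely determined: if $CU + d_1 =_d C'U + d_2$ with $C, C' > 0$, then $C = C'$ and $d_1 = d_2$ (the functional equation $F_U(y) = F_U(ay+b)$ otherwise forces $U$ to be degenerate). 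Combining this with the previous display forces the scale in the complex stability relation to equal $(A^r + B^r)^{1/r}$, giving existence.

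Uniqueness of $r$ should then be immediate: if two values $r_1, r_2 \in (0,2]$ both satisfied the conclusion, then $(A^{r_1} + B^{r_1})^{1/r_1} = (A^{r_2} + B^{r_2})^{1/r_2}$ for all $A, B > 0$, which easily forces $r_1 = r_2$ (for instance by fixing $A = 1$ and comparing asymptotics as $B \to \infty$). The main obstacle will be the bookkeeping around degeneracy, specifically verifying that a nonconstant complex-stable $X$ must have at least one nondegenerate real projection and that the index produced by $U$ agrees with the one produced by $V$ when both are nondegenerate; this agreement is immediate because both must equal the common scale $C(A,B)$ appearing in the complex stability relation. The remainder is a clean reduction to the real case already handled by Theorem \ref{thm:index}.
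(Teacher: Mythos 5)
The paper presents this theorem as background material from probability theory and does not supply a proof of it, so there is no argument of the paper's to compare yours against. That said, your reduction to the real case is sound. Projecting the complex stability relation $AX_1 + BX_2 =_d CX + D$ onto real and imaginary coordinates shows that $U = \Re X$ and $V = \Im X$ are each real-stable in the sense of Definition \ref{def:stable} with the \emph{same} scaling constant $C = C(A,B)$; choosing a nondegenerate coordinate $U$ and invoking Theorem \ref{thm:index} together with the uniqueness of the affine map in a distributional identity $CU + d_1 =_d C'U + d_2$ for nondegenerate $U$ (a standard convergence-of-types fact, obtained by iterating $U =_d aU + b$) forces $C(A,B) = (A^r + B^r)^{1/r}$, giving existence. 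Uniqueness of $r$ then follows immediately, for instance by taking $A = B = 1$ to get $2^{1/r_1} = 2^{1/r_2}$.

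Your caveat about degeneracy is correct and worth stating: for almost surely constant $X$ every $r$ works with a suitable $D$, so the theorem (like Theorem \ref{thm:index} itself) implicitly assumes $X$ is nondegenerate; and the reason the indices produced by $U$ and $V$ must coincide when both are nondegenerate is, exactly as you say, that each is tied to the common complex scale $C(A,B)$. The only step you invoke without a full proof is the uniqueness of the affine map, but as a well-known lemma in the stable literature it is reasonable to cite rather than reprove. The argument is complete.
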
 

Again, the number $r$ is called the index of stability of $X$.  

When dealing with complex-valued random variables, we will need a stronger property than 
symmetry.

\begin{definition}\label{def:isotropic}
A complex-valued random variable $X$ is \emph{isotropic} if $\zeta X =_d X$ for every unimodular complex number $\zeta$.
\end{definition}

The characteristic function of a complex-valued random variable $X$ is defined to be 
$\phi_X(z) = E[i\langle X, z \rangle]$.  Again, these functions characterize the distribution of $X$.  
Their forms are known, but these results will not play any role in our investigations.  
The interested reader is referred to Chapter 2 of \cite{Samorodnitsky.Taqqu.1994}.

\subsection{Background from computable analysis}\label{sec:back::subsec:CA}

Here we present the formal definitions of \emph{presentation} and \emph{computable presentation} of a Banach space.  We also define the computable vectors and sequences of a presentation and the computable maps between presentations.  Our approach is essentially that in \cite{Pour-El.Richards.1989}.  See also \cite{Weihrauch.2000}.

Let $\strb$ be a Banach space.  A \emph{presentation} of $\strb$ is a pair $(\strb, \{v_n\}_{n \in \N})$ 
where $\{v_n\}_{n \in \N}$ is a linearly dense sequence of vectors of $\strb$ (i.e. $\B$ is the closed linear span of $\{v_0, v_1, \ldots \}$).  If $\strb^\# = (\strb, \{v_n\}_{n \in \N})$ is a presentation of $\strb$, then we refer to $v_n$ as the \emph{$n$-th distinguished vector of $\strb^\#$}.  
Thus, to define a presentation of a Banach space, it suffices to specify the distinguished vectors.  

Suppose $\strb^\#$ is a presentation of a Banach space $\strb$.  A \emph{rational vector} of $\strb^\#$ is a 
rational linear combination of distinguished vectors of $\strb^\#$; i.e., a vector that can be expressed in the form $\sum_{j = 0}^n \alpha_j v_j$ where each $\alpha_j$ is a rational scalar and each $v_j$ is a distinguished vector of $\strb^\#$. 

A presentation $\B^\#$ is \emph{computable} if the norm function is computable on the rational vectors of $\B^\#$; i.e. there is an algorithm that given a $k \in \N$ and a (code of) a rational vector $v$ of $\B^\#$, 
computes a rational number $q$ so that $|q - \norm{v}_\B| < 2^{-k}$.  A code of such an algorithm is called an \emph{index} of the presentation.

Among all presentations of a Banach space $\mathcal{B}$, one may 
be designated as \emph{standard}; in this case, we will identify $\mathcal{B}$ with its standard presentation.  The standard presentation of the field of scalars $\F$ is defined by declaring $1$ to be the $n$-th distinguished vector for all $n$.
The standard presentations of $\ell^p_n$ and $\ell^p$ are defined via the standard bases.  
  The standard presentation of $L^p[0,1]$ is defined via the indicator functions of dyadic subintervals of $[0,1]$.  
  The standard presentation of $L^p(0,1)$ is defined similarly.  
  The standard presentation of $L^p((0,1)^\omega)$ is defined by declaring the $\langle n,k\rangle$-th 
  distinguished vector to be the indicator function of $(0,1)^n \times I_k \times (0,1)^\omega$ where $I_k$ is the $k$-th 
  dyadic subinterval of $(0,1)$. 
   Note that these presentations are computable if $p$ is.

Fix a presentation $\B^\#$ of a Banach space $\B$.  We say that a vector $v$ of $\B$ is a \emph{computable vector} of $\B^\#$ if there is an algorithm that given any $k \in N$ produces a rational vector $u$ of $\B^\#$ 
so that $\norm{u - v}_\B < 2^{-k}$.  A sequence $\{v_n\}_{n \in \N}$ is a \emph{computable sequence} of 
$\B^\#$ if $v_n$ is a computable vector of $\B^\#$ uniformly in $n$.

We now discuss computability properties of sets of vectors.  Let $\B^\#$ be a presentation of a Banach space $\B$, and let $X \subseteq \B$.  We say $X$ is \emph{c.e. open} if it is the union of a computable sequence of rational balls of $\B^\#$ (i.e. a sequence of rational balls whose centers and radii form computable sequences).  We say $X$ is \emph{c.e. closed} if it is closed and the set of all rational balls of $\B^\#$ that contain a point of $X$ is c.e..  We say $X$ is \emph{computably compact} if it is compact and if 
there is an algorithm that enumerates all finite sets $\{B_1, \ldots, B_n\}$ of rational balls of $\B^\#$ so that 
$X \subseteq \bigcup_j B_j$ and $B_j \cap X \neq \emptyset$ for each $j$.  

We will utilize the following which is essentially Lemma 5.2.5 of \cite{Weihrauch.2000}

\begin{proposition}\label{prop:HB}
If $X \subseteq \R^n$ is bounded and c.e. closed, and if $\R^n - X$ is c.e. open, then $X$ is computably compact.
\end{proposition}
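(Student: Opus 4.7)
The plan is to show that, for a finite collection $\mathcal{C} = \{B_1, \ldots, B_n\}$ of rational open balls, the conjunction of (i) each $B_j$ meets $X$ and (ii) $X \subseteq \bigcup_j B_j$ is a c.e. property; the algorithm then enumerates valid covers by checking this over all finite collections. Together with the topological fact that $X$ is compact (being closed and bounded in $\R^n$), this gives computable compactness.

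Condition (i) is directly c.e. from the hypothesis that $X$ is c.e. closed: one simply waits for each $B_j$ to appear in the enumeration of rational balls meeting $X$.

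For condition (ii), let $V = \R^n \setminus X$. The plan is to first effectively produce a rational $R$ such that $X \subseteq \bar B(0, R)$, and then use the equivalence
\[
X \subseteq \bigcup_j B_j \iff \bar B(0,R) \setminus \bigcup_j B_j \subseteq V.
\]
The forward direction is immediate from $X \subseteq \bar B(0, R)$, while the backward direction follows since $X$ and $V$ are disjoint. The right-hand side is c.e.: the set $\bar B(0,R) \setminus \bigcup_j B_j$ is compact and effectively described from $\mathcal{C}$ and $R$, the set $V$ is c.e. open, and containment of a compact set in a c.e. open set is verified by producing a finite subcover of the compact set from the enumeration of rational balls in $V$.

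The main obstacle is the effective computation of the bound $R$. Here I would exploit the boundedness of $X$ together with the c.e. open complement: by enumerating rational balls in $V$ and candidate rationals $R$, one can certify that compact annuli $\{R \leq |x| \leq R'\}$ lie in $V$; combined with the c.e. enumeration of rational balls meeting $X$, restricted to bounded radii so that their centers are forced to lie within unit distance of $X$, one expects to extract a rational upper bound on $X$ in c.e. fashion. This bounding step is the only place where the hypothesis of boundedness is used non-trivially, and is the part of the argument I expect to require the most care.
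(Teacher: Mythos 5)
The paper provides no proof of this proposition; it is cited as essentially Lemma~5.2.5 of Weihrauch \cite{Weihrauch.2000}, so there is nothing in the text to compare against and the only question is whether your argument is correct. Your main construction is sound and is the expected one: enumerate finite collections $\{B_1,\ldots,B_n\}$ of rational balls, semi-decide ``each $B_j$ meets $X$'' directly from the c.e.\ closedness of $X$, and semi-decide ``$X\subseteq\bigcup_j B_j$'' by verifying that $\bar{B}(0,R)\setminus\bigcup_j B_j$ is covered by finitely many balls from the enumeration of $V=\R^n\setminus X$, which amounts to checking that those balls together with $B_1,\ldots,B_n$ cover $\bar{B}(0,R)$ (a semi-decidable condition on rational data). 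Your equivalence for condition (ii) is correct in both directions.

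The step you flag as the ``main obstacle,'' however, rests on a misreading of what is required, and the fix you sketch would not work anyway. There is no need to \emph{compute} $R$: computable compactness is a property of the single set $X$, not a uniform operation on names of $X$, and ``$X$ is bounded'' is a standing hypothesis. You are therefore free to take any integer $R$ with $X\subseteq\bar{B}(0,R)$ as non-uniform advice hardwired into the enumeration procedure; nothing in the definition obliges you to extract $R$ effectively from the two given enumerations, and the application in the paper (Proposition~\ref{prop:fin.dim.ball}) uses the result only in this non-uniform form. Separately, the annulus mechanism you propose fails on its own terms: certifying $\{R\le|x|\le R'\}\subseteq V$ does \emph{not} imply $X\subseteq\bar{B}(0,R)$, because $X$ need not be connected and may have points both in $\{|x|<R\}$ and in $\{|x|>R'\}$, with the annulus threading through $V$ between the two pieces. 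The enumeration of rational balls meeting $X$ can only ever yield \emph{lower} bounds on the extent of $X$ at any finite stage, so it cannot certify that the outer region contains no points of $X$. Once $R$ is treated as advice, this paragraph of your argument can simply be deleted and the rest goes through.
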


Presentations $\mathcal{B}_0^\#$ and $\mathcal{B}_1^\#$ of Banach spaces $\mathcal{B}_0$ and $\mathcal{B}_1$ respectively induce an associated class of computable maps from 
$\mathcal{B}_0^\#$ into $\mathcal{B}_1^\#$.  Namely, a map $T : \mathcal{B}_0 \rightarrow \mathcal{B}_1$ is said to be a \emph{computable map of $\mathcal{B}_0^\#$ into $\mathcal{B}_1^\#$} if 
there is an algorithm $P$ with the following properties:
\begin{enumerate}
	\item Given a (code of a) rational ball $B_1$ of $\mathcal{B}_0^\#$ as input, if $P$ halts then it produces a rational ball $B_2$ of $\mathcal{B}_1^\#$ so that $T[B_1] \subseteq B_2$.  
	
	\item If $U$ is a neighborhood of $T(v)$, then there is a rational ball $B_1$ of $\mathcal{B}_0^\#$ so that $v \in B_1$ and given $B_1$, $P$ produces a rational ball $B_2 \subseteq U$.
\end{enumerate}
 In other words, it is possible to compute arbitrarily good approximations of $T(v)$ from sufficiently good approximations of $v$.  

The computability of a map can often be demonstrated by means of computable moduli of continuity.  
Specifically, when $T$ maps a Banach space $\B_0$ into a Banach space $\B_1$, we say 
$h : \N \rightarrow \N$ is a \emph{modulus of continuity} for $T$ if for every $k \in \N$ and all vectors 
$v_0$ and $v_1$ of $\B_0$, if $\norm{v_0 - v_1}_\B < 2^{-h(k)}$, then 
$\norm{T(v_0) - T(v_1)}_\B < 2^{-k}$.  Now, fix presentations $\B_0^\#$ and $\B_1^\#$ of $\B_0$ and $\B_1$ respectively.   It is fairly easy to show that if $T : \B_0 \rightarrow \B_1$ has a computable modulus of continuity, and if $T$ maps a computable and linearly dense sequence of $\B_0^\#$ onto a computable 
sequence of $\B_1^\#$, then $T$ is a computable map of $\B_0^\#$ into $\B_1^\#$.  
It then follows that if $T$ is linear and bounded, then $T$ is a computable map of $\mathcal{B}_0^\#$ into $\mathcal{B}_1^\#$ if and only if $T$ maps a computable and linearly dense sequence of $\B_0^\#$ onto a computable sequence of $\mathcal{B}_1^\#$.

The following, which is a fairly straightforward consequence of the definitions, is folklore.

\begin{proposition}\label{prop:comp.map.sets}
Suppose $\B_0^\#$ and $\B_1^\#$ are computable presentations of Banach spaces, and let $T$ be a computable map of 
$\B_0^\#$ into $\B_1^\#$.
\begin{enumerate}
	\item If $U$ is a c.e. open subset of $\B_1^\#$, then, $T^{-1}[U]$ is a c.e. open subset of $\B_0^\#$. 
	
	\item If $C$ is a c.e. closed subset of $\B_0^\#$, then 
$\overline{T[C]}$ is a c.e. closed subset of $\B_1^\#$.
\end{enumerate}
\end{proposition}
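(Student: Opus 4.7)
The plan is to unravel the definitions and glue together the algorithm $P$ realizing $T$ with the given enumerations of $U$ and of the rational balls of $\B_0^\#$ that meet $C$. Part (1) is proved directly by dovetailing; part (2) is reduced to part (1) via the observation that an open ball of $\B_1^\#$ meets the closure $\overline{T[C]}$ if and only if its $T$-preimage meets $C$.

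For part (1), fix a computable sequence $\{B_n\}_{n \in \N}$ of rational balls of $\B_1^\#$ with $U = \bigcup_n B_n$. I would enumerate those rational balls $B$ of $\B_0^\#$ such that $P$, run on $B$, halts with output $B^*$ strictly contained in some $B_n$. Strict containment of open rational balls $B(c,r) \subset B(c', r')$ amounts to $\|c - c'\| + r < r'$, which is a c.e. relation since the norm is computable on rational vectors; so the enumeration is indeed c.e. Call the union of the enumerated balls $V$. Every enumerated $B$ satisfies $T[B] \subseteq B^* \subset B_n \subseteq U$, giving $V \subseteq T^{-1}[U]$. Conversely, if $v \in T^{-1}[U]$ then $T(v) \in B_n$ for some $n$, so we can pick a smaller open ball $W$ about $T(v)$ with $\overline{W} \subset B_n$; property (2) of $P$ then furnishes a rational ball $B \ni v$ on which $P$ halts with output inside $W$, hence strictly inside $B_n$, so $v \in B \subseteq V$. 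Thus $T^{-1}[U] = V$ is c.e. open.

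For part (2), I reduce to part (1) together with uniformity. For an open ball $B$ of $\B_1^\#$, we have $B \cap \overline{T[C]} \neq \emptyset$ iff $B \cap T[C] \neq \emptyset$ iff $C \cap T^{-1}[B] \neq \emptyset$. By part (1), applied with $U = B$ and uniformly in $B$, one obtains a computable enumeration $\{D_k\}_{k \in \N}$ of rational balls of $\B_0^\#$ with $T^{-1}[B] = \bigcup_k D_k$. Since $C$ is c.e. closed, the family of rational balls of $\B_0^\#$ that meet $C$ is c.e. Consequently $C \cap T^{-1}[B] \neq \emptyset$ iff some $D_k$ lies in that family, which is a c.e. condition; this yields a c.e. enumeration of the rational balls of $\B_1^\#$ meeting $\overline{T[C]}$. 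The only real delicacy is confirming that the construction in part (1) produces an index for $T^{-1}[U]$ uniformly from indices for $T$ and $U$, but this is immediate from inspection of the dovetailing procedure described.
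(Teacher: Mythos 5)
Your proof is correct. The paper states this proposition without proof, labeling it folklore, so there is no author argument to compare against; your unfolding of the definitions is the standard one. Part (1) correctly dovetails the algorithm $P$ for $T$ against the enumeration of $U$, using the c.e. sufficient condition $\|c-c'\|+r<r'$ for inclusion of rational balls, and the converse inclusion is handled properly by inserting a slightly smaller neighborhood $W$ of $T(v)$ so that property (2) of $P$ yields a ball whose output lands strictly inside some $B_n$. Part (2) reduces cleanly to part (1) via the observation that an open ball meets $\overline{T[C]}$ iff it meets $T[C]$ iff its preimage meets $C$, and the uniformity of the part (1) construction (which you rightly flag as the one point needing inspection) is indeed immediate.
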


\section{Preliminary results from computable analysis}\label{sec:prelim}

We first state and prove a number of fairly straightforward results on the effective theory of integration.  
We then prove some preliminary results on effective presentations of Banach spaces.

\subsection{Effective integration}\label{sec:prelim::subsec:int}

For the sake of computing the densities and cumulative distribution functions of stable distributions, we will need the following lemmas concerning the effective theory of integration.   It will suffice to consider functions 
defined on the real line.  Let $m$ denote Lebesgue measure on $\R$.

\begin{lemma}\label{lm:int.fnc.comp}
Suppose $f : \R^2 \rightarrow \C$, $\psi : \R \rightarrow \R$, and $\int_\R \psi\ dm$ are computable. 
Assume further that $|f(x,y)| \leq \psi(y)$ for all $(x,y) \in \R^2$.  Then, the map
\[
x \mapsto \int_\R f(x,y) dm(y)
\]
is computable.
\end{lemma}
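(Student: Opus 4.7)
The strategy is to combine effective truncation, justified by the integrable dominator $\psi$, with effective quadrature on a compact interval. Given $x \in \R$ (presented by a Cauchy name) and a desired precision $2^{-k}$, I want to produce a rational scalar within $2^{-k}$ of $\int_\R f(x,y)\, dm(y)$.

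For the truncation, observe that $\psi$ is non-negative since $0 \leq |f(x,y)| \leq \psi(y)$, and that the restriction of $\psi$ to any rational interval $[-N,N]$ has a computable modulus of continuity, so the Riemann integral $\int_{-N}^N \psi\, dm$ is computable uniformly in the integer $N$ via a standard Riemann-sum quadrature. Since $\int_\R \psi\, dm$ is computable by hypothesis and $\int_{-N}^N \psi\, dm \nearrow \int_\R \psi\, dm$ as $N \to \infty$, I can effectively search for an integer $N$ such that
\[
\int_\R \psi\, dm - \int_{-N}^{N} \psi\, dm < 2^{-k-2}.
\]
The domination then yields
\[
\left| \int_{\R \setminus [-N,N]} f(x,y)\, dm(y) \right| \leq \int_{\R \setminus [-N,N]} \psi\, dm < 2^{-k-2}.
\]

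For the quadrature, I extract from the Cauchy name of $x$ a rational $x_0$ with $|x - x_0| \leq 1$ and form the computably compact box $B = [x_0 - 2, x_0 + 2] \times [-N,N]$, which contains $(x,y)$ for every $y \in [-N, N]$. Since $f$ is computable and $B$ is computably compact, $f|_B$ is uniformly continuous with a computable modulus; using this modulus I choose a rational $\delta > 0$ so that $|y - y'| < \delta$ with $y,y' \in [-N,N]$ implies $|f(x, y) - f(x, y')| < 2^{-k-2}/(4N)$. Partitioning $[-N, N]$ into rational subintervals of length less than $\delta$ with chosen sample points $y_i$, I form the Riemann sum $R = \sum_i f(x, y_i)\, \Delta y_i$, replacing each $f(x, y_i)$ by a sufficiently good rational approximation obtained from the algorithm witnessing computability of $f$. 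Then
\[
\left| R - \int_{-N}^{N} f(x,y)\, dm(y) \right| < 2^{-k-1},
\]
which combined with the tail estimate gives a rational scalar within $2^{-k}$ of $\int_\R f(x,y)\, dm(y)$.

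The delicate point is making everything uniform in $x$: the cutoff $N$ does not depend on $x$, but the modulus parameter $\delta$ depends on the rational approximation $x_0$ through the box $B$. The whole construction is nonetheless a composition of computable operations applied to $x$ and the precision $k$, so the resulting map is computable. The main technical ingredient behind this is the standard fact that computable functions on $\R^n$ admit effective moduli of continuity on computably compact subsets; once this is available, the rest is routine bookkeeping.
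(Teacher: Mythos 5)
Your proposal is correct and follows essentially the same route as the paper: truncate the integral to $[-N,N]$ using the dominating function $\psi$ together with the computability of $\int_\R \psi\,dm$ to control the tail, then compute the truncated integral effectively. The paper simply asserts that $g_n(x)=\int_{-n}^n f(x,y)\,dy$ is computable uniformly in $n$ and concludes by noting that the $g_n$ converge uniformly to the target with a computable modulus of convergence; you unpack that assertion into an explicit compact-box quadrature argument, which is the standard content behind it, so the two proofs coincide in substance.
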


\begin{proof}
Let $\phi(x) = \int_\R f(x,y) dm(y)$.  For each $n \in \N$, let $g_n(x) = \int_{-n}^n f(x,y) dy$ for all $x \in R$.
Thus, $g_n$ is computable uniformly in $n$.  Since $\psi$ and $\int_\R \psi\ dm$ are computable, there is a computable $h : \N \rightarrow \N$ so that $\int_{|x| \geq n} \psi(x)\ dx < 2^{-k}$ for all $k \in \N$.  
Suppose $n \in \N$ and $n \geq h(k)$.  Then, for all $x \in \R$, 
\begin{eqnarray*}
|g_n(x) - \phi(x)| & \leq & \int_{|y| \geq n} |f(x,y)|\ dy\\
& \leq & \int_{|y| \geq n} \psi(y)\ dy < 2^{-k}.
\end{eqnarray*}
Thus, $\{g_n\}_{n \in \N}$ converges uniformly to $\phi$ with a computable modulus of convergence.  
Thus, $\phi$ is computable. 
\end{proof}

If $I$ is an interval, then a measurable function $f : I \rightarrow \C$ is said to be \emph{computably integrable} if $\int_I |f(x)|\ dx < \infty$.

\begin{lemma}\label{lm:dom.comp.int}
Suppose $\psi : \R \rightarrow \R$ is a computable and computably integrable function.  
Suppose $f : \R \rightarrow \R$ is a computable function for which there exists an $a \in \N$ so that 
$|f(x)| \leq \psi(x)$ whenever $|x| \geq a$.  Then, $f$ is computably integrable.
\end{lemma}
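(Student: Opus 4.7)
The plan is to imitate the truncation strategy of Lemma \ref{lm:int.fnc.comp} in one variable. For each integer $n \geq a$ set $I_n := \int_{-n}^n f(x)\, dm(x)$; since $f$ is computable and $[-n,n]$ is a computably compact interval, standard effective integration gives that $I_n$ is a computable real uniformly in $n$.

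First I would verify ordinary integrability of $f$: because $f$ is computable it is continuous, hence bounded on the compact set $[-a,a]$, so $\int_{-a}^a |f|\, dm < \infty$. Combined with $\int_{|x|\geq a} |f|\, dm \leq \int_{|x|\geq a} \psi\, dm \leq \int_\R \psi\, dm < \infty$, this yields $\int_\R |f|\, dm < \infty$, which already handles the literal integrability clause of the conclusion.

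The rest of the proof aims at producing the improper integral $\int_\R f\, dm$ as a computable real via a computable modulus of convergence for $\{I_n\}_{n \geq a}$. The basic estimate is
\[
\left| I_n - \int_\R f\, dm \right| \leq \int_{|x| \geq n} |f|\, dm \leq \int_{|x| \geq n} \psi\, dm \;=:\; T_n \qquad (n \geq a).
\]
The key effective observation is that $T_n = \int_\R \psi\, dm - \int_{-n}^n \psi\, dm$ is itself a computable real uniformly in $n$: the first term is computable by hypothesis on $\psi$, while the second is uniformly computable from $n$ since $\psi$ is. Furthermore, $|f| \leq \psi$ on $\{|x|\geq a\}$ forces $\psi \geq 0$ there, so $\{T_n\}_{n \geq a}$ is a nonnegative decreasing sequence tending to $0$.

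From here the argument is routine: for each $k \in \N$ I search effectively for the least $n_k \geq a$ with $T_{n_k} < 2^{-k}$, using that $T_n$ is uniformly computable and that such $n$ must exist. Then $|I_{n_k} - \int_\R f\, dm| < 2^{-k}$, so $\{I_{n_k}\}_{k \in \N}$ is a computable sequence of computable reals converging to $\int_\R f\, dm$ with a computable rate, exhibiting $\int_\R f\, dm$ as a computable real. The main obstacle is precisely the effective tail bound: the abstract fact $T_n \to 0$ would only give integrability of $f$, and it is the full strength of $\int_\R \psi\, dm$ being a \emph{computable} real (not merely finite) that upgrades $T_n \to 0$ to a computable modulus of convergence for $\{I_n\}$.
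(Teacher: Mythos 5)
Your truncation argument is essentially the paper's proof: fix $k$, effectively locate an $n \geq a$ with $\int_{|x|\geq n}\psi\,dm < 2^{-k}$ (using, exactly as you note, that the computability of $\int_\R\psi\,dm$ together with the uniformly computable truncations $\int_{-n}^n\psi\,dm$ lets the tail be certified small), and approximate the truncated integral. One small mismatch: the conclusion requires $\int_\R |f|\,dm$ to be a computable real (this is what ``computably integrable'' is being used to mean here, despite the apparent typo in the paper's definition), whereas you approximate $\int_\R f\,dm$; since $|f|$ is computable whenever $f$ is, replacing $f$ by $|f|$ in your $I_n$ repairs this without any further change.
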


\begin{proof}
Let $k \in \N$ be given.  Since $\psi$ is computably integrable, it is possible to compute an 
$n_0 \in \N$ so that $n_0 \geq a$ and $\int_{|x| \geq n_0} \psi(x) dx < 2^{-(k+1)}$.  Since 
$f$ is computable, it is possible to compute a rational number $q$ so that 
$|q - \int_{|x| \leq n_0} |f(x)| dx| < 2^{-(k+1)}$.  Thus, 
\begin{eqnarray*}
|q - \int_{\R} |f|\ dm| & \leq & |q - \int_{|x| \leq n_0} |f(x)|\ dx| + \int_{|x| \geq n_0} |f(x)|\ dx\\
& \leq & 2^{-(k+1)} + \int_{|x| \geq n_0} \psi(x)\ dx < 2^{-k}.
\end{eqnarray*}
\end{proof}

\begin{lemma}\label{lm:comp.int.int.comp}
If $f : \R \rightarrow \R$ is computable and computably integrable, then $\int_{\R} f\ dm$ is computable.
\end{lemma}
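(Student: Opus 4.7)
The plan is a direct truncation argument, in the same spirit as the proof of Lemma \ref{lm:int.fnc.comp}. Given a target precision $2^{-k}$, I will find an $N \in \N$ large enough that the tail $\int_{|x|>N} |f|\,dx$ falls below $2^{-(k+1)}$, and then compute a rational $q$ approximating $\int_{-N}^N f\,dx$ to within $2^{-(k+1)}$. The triangle inequality then gives
\[
\left| q - \int_\R f\,dm \right| \;\leq\; \left| q - \int_{-N}^N f\,dx \right| \;+\; \int_{|x|>N} |f|\,dx \;<\; 2^{-k}.
\]

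The underlying subroutine I need is that for any computable $g : \R \to \R$, the partial integrals $\int_{-n}^n g\,dx$ are computable uniformly in $n$. This is a standard fact from computable analysis: a computable real function admits a computable modulus of uniform continuity on each compact interval $[-n,n]$, and so Riemann sums approximate $\int_{-n}^n g\,dx$ to arbitrary precision effectively in $n$. I will apply this both to $f$ (for the second half of the estimate above) and to the computable function $|f|$ (to locate $N$ in the first step).

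To find $N$, I use the hypothesis that $f$ is computably integrable, i.e.\ that $\int_\R |f|\,dm$ is a computable real. Then
\[
\int_{|x|>n} |f|\,dx \;=\; \int_\R |f|\,dm \;-\; \int_{-n}^n |f|\,dx
\]
is computable uniformly in $n$ and decreases to $0$, so I can effectively search for an $N$ making this tail fall below $2^{-(k+1)}$. I do not anticipate any genuine obstacle; the whole procedure composes into a single algorithm uniform in $k$, and its structure closely parallels the argument already used in the proof of Lemma \ref{lm:dom.comp.int}.
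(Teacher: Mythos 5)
Your argument is correct, but it takes a genuinely different (if closely related) route from the paper's. The paper reduces to the earlier lemma rather than redoing the truncation: it writes $f = f^+ - f^-$, notes that $f^+$ and $f^-$ are computable and dominated by $|f|$ (so Lemma~\ref{lm:dom.comp.int} with $\psi = |f|$ gives that $f^+$ and $f^-$ are computably integrable), and then, since $f^+$ and $f^-$ are nonnegative, ``computably integrable'' already means that $\int_\R f^+\,dm$ and $\int_\R f^-\,dm$ are computable reals; the conclusion follows by subtracting. Your proof instead carries out the truncation argument directly on $f$: find $N$ with a small tail, approximate $\int_{-N}^N f$, and apply the triangle inequality. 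Both are sound. The paper's version is shorter because it exploits the nonnegativity of $f^\pm$ to recycle Lemma~\ref{lm:dom.comp.int} verbatim, while yours is more self-contained and makes the computational content explicit in one place. One small point worth making more explicit in your write-up: the effective search for $N$ works because the tail $\int_{|x|>n}|f|\,dx$ is uniformly computable, nonincreasing, and tends to $0$, so approximating it to precision $2^{-(k+2)}$ for successive $n$ and accepting the first $n$ whose approximation is below $2^{-(k+2)}$ is guaranteed to halt and yields a tail below $2^{-(k+1)}$; this is the standard trick and you clearly have it in mind, but it is the one step a reader might pause on.
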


\begin{proof}
Since $f$ is computable, $f^+$ and $f^-$ are computable.  Since $f$ is computably integrable,
$f^+$ and $f^-$ are computably integrable by Lemma \ref{lm:dom.comp.int}.  
It follows that $\int_{\R} f\ dm$ is computable.
\end{proof}

\begin{lemma}\label{lm:int.e.t.r}
If $r$ is a computable positive real number, then $\int_{-\infty}^\infty \exp(-|t|^r)\ dt$ is computable.
\end{lemma}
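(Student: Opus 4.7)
The strategy is to reduce to the two preceding lemmas by producing a computable function $f(t) = \exp(-|t|^r)$ together with a suitable computable dominating function. I would first verify that $f \colon \R \to \R$ is computable: for $t$ bounded away from $0$ we have $|t|^r = \exp(r \ln |t|)$, which is computable from the computability of $\exp$, $\ln$, and $r$; while for $t$ near $0$ the value $f(t)$ is close to $f(0) = 1$, so uniform effective approximation is immediate. Thus $f$ is a computable nonnegative function on $\R$.

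The next step is to produce a computable dominating function $\psi$ whose integral is computable. I would take $\psi(t) = 1/(1 + t^2)$, which is obviously computable on $\R$ and satisfies $\int_\R \psi \, dm = \pi$, a computable real. To apply Lemma \ref{lm:dom.comp.int} it remains to exhibit an integer $a \in \N$, computable from $r$, with $f(t) \leq \psi(t)$ for all $|t| \geq a$.

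Since $r$ is a positive computable real, I would first effectively compute a rational $r_0$ with $0 < r_0 < r$. For $|t| \geq 1$ one has $|t|^r \geq |t|^{r_0}$, hence $f(t) \leq \exp(-|t|^{r_0})$, and the inequality $\exp(-|t|^{r_0}) \leq 1/(1 + t^2)$ is equivalent to $\ln(1 + t^2) \leq |t|^{r_0}$. Using the elementary estimates $\ln(1 + t^2) \leq \ln 2 + 2 \ln |t|$ valid for $|t| \geq 1$ and $\ln |t| \leq (2/r_0) |t|^{r_0/2}$ valid for $|t| \geq 1$ (the latter from integrating $t^{-1} \leq t^{r_0/2 - 1}$ from $1$ to $|t|$), I obtain
\[
\ln(1 + t^2) \leq \ln 2 + (4/r_0) |t|^{r_0/2}.
\]
Requiring each of $\ln 2 \leq |t|^{r_0}/2$ and $(4/r_0)|t|^{r_0/2} \leq |t|^{r_0}/2$ separately, I can take
\[
a = \max\!\left\{1,\ \lceil (2 \ln 2)^{1/r_0} \rceil,\ \lceil (8/r_0)^{2/r_0} \rceil\right\},
\]
which is computable from $r_0$, and hence from $r$.

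With this bound in hand, Lemma \ref{lm:dom.comp.int} yields that $f$ is computably integrable, and Lemma \ref{lm:comp.int.int.comp} then delivers the computability of $\int_{-\infty}^\infty \exp(-|t|^r) \, dt$. The only mildly delicate point is producing an explicit $a$ effectively from $r$; once a rational lower bound $r_0$ has been extracted, the argument reduces to elementary calculus, so I do not expect any substantive obstacle.
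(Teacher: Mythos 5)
Your proof is correct and takes essentially the same approach as the paper: both dominate $\exp(-|t|^r)$ by $\psi(t) = (1+t^2)^{-1}$ for large $|t|$ and then invoke Lemmas~\ref{lm:dom.comp.int} and~\ref{lm:comp.int.int.comp}. The only difference is cosmetic — the paper verifies the eventual domination via L'Hospital's rule, while you use explicit elementary estimates. Note also that the concern you flag at the end (producing $a$ \emph{effectively} from $r$) is not actually required: Lemma~\ref{lm:dom.comp.int} only asks that \emph{some} $a \in \N$ with the domination property exist, not that it be computed uniformly from the data, so the L'Hospital argument suffices and your explicit construction of $a$, while perfectly fine, is more than is needed.
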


\begin{proof}
We first show that whenever $|t|$ is sufficiently large, $\exp(-|t|^r) \leq (1 + t^2)^{-1}$. 
By L'Hospital's Rule, 
\begin{eqnarray*}
\lim_{s \rightarrow \infty} \frac{\ln(1 + s^2)}{s^r} & = & \lim_{s \rightarrow \infty} \frac{2s(1 + s^2)^{-1}}{rs^{r-1}}\\
& = & \lim_{s \rightarrow \infty} \frac{2}{r(s^r + s^{r - 2})} = 0.
\end{eqnarray*}
Thus, $\ln(1 + s^2) \leq s^r$ whenever $s$ is sufficiently large.  
Therefore, $\exp(-s^r) \leq (1 + s^2)^{-1}$ whenever $s$ is sufficiently large.  

The computability of $\int_{-\infty}^\infty \exp(-|t|^r)\ dt$ now follows from Lemmas \ref{lm:dom.comp.int} and \ref{lm:comp.int.int.comp}.
\end{proof}

\begin{lemma}\label{lm:comp.int.func}
Suppose $f : \R \rightarrow \R$ is computable and computably integrable.  Then, 
\[
y \mapsto \int_{-\infty}^y f(t) dt
\]
is computable.
\end{lemma}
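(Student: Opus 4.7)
The plan is to realize $F(y) := \int_{-\infty}^y f(t)\,dt$ as a uniform limit of a computable sequence of continuous functions $F_n$, with a computable rate of convergence. For each $n \in \N$, I would set
\[
F_n(y) = \begin{cases} 0 & y \le -n,\\ \int_{-n}^y f(t)\,dt & -n \le y \le n,\\ \int_{-n}^n f(t)\,dt & y \ge n. \end{cases}
\]
A direct estimate gives $|F(y) - F_n(y)| \le \int_{|t|>n} |f(t)|\,dt$ for every $y \in \R$. Since $|f|$ inherits computability and computable integrability from $f$, the same tail-extraction step used inside the proof of Lemma~\ref{lm:dom.comp.int} lets me compute, from $k$, an index $n_k$ with $\int_{|t|>n_k} |f(t)|\,dt < 2^{-k}$. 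This furnishes a computable modulus of uniform convergence $F_{n_k} \to F$.

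The second task is to verify that $F_n$ is computable as a function of $y$, uniformly in $n$. Since $f$ is a computable real-valued function on $\R$, it has a computable modulus of continuity on each compact interval $[-n,n]$, so by sampling $f$ on a sufficiently fine rational grid one extracts a rational upper bound $M_n$ for $|f|$ on $[-n,n]$. Consequently $F_n$ is $M_n$-Lipschitz on $[-n,n]$ and constant on each of the two unbounded pieces, which supplies a uniformly computable modulus of continuity for $F_n$. For each rational $q \in [-n,n]$, the value $F_n(q) = \int_{-n}^q f(t)\,dt$ is computable uniformly in $n$ and $q$ by Riemann-sum approximation driven by the computable modulus of $f$ on $[-n,n]$. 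Pairing a dense set of computable values with a computable modulus of continuity yields the uniform computability of $F_n$ on arbitrary real inputs, as recalled in Section~\ref{sec:back::subsec:CA}.

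Combining the two pieces closes the argument: to approximate $F(y)$ within $2^{-k}$, first produce $n_{k+1}$ as in the first step and then approximate $F_{n_{k+1}}(y)$ to within $2^{-(k+1)}$ using the uniform computability of $F_n$. I do not anticipate a deep obstacle here; the only real bookkeeping point is preserving uniformity in $n$ throughout, which is precisely what the computable modulus of continuity of $f$ on each $[-n,n]$ is designed to supply, and what the computable tail bounds for $|f|$ deliver on the other side.
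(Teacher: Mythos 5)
Your proposal is correct and takes essentially the same approach as the paper: both truncate the integral at $[-n,n]$ to get a uniformly computable family $F_n$, use the computable tail bound $\int_{|t|>n}|f|\,dt$ to control $F - F_n$, and conclude $F$ is computable. The only cosmetic difference is that you invoke the ``computable uniform limit with computable rate'' criterion, whereas the paper explicitly assembles a computable modulus of continuity for $F$ from the moduli of the $F_n$ together with the tail bound and then separately verifies $\{F(q)\}_{q\in\Q}$ is computable.
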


\begin{proof}
Set 
\[
F(y) = \int_{-\infty}^y f(t) dt.
\]
Since $f$ is computable and computably integrable, there is a computable and increasing 
$a : \N \rightarrow \N$ so that 
\[
\int_{|t| \geq a(k)} |f(t)| dt < 2^{-k}
\]
for all $k \in \N$.  \\

We first show that $F$ has a computable modulus of continuity.   To see this, for each $n \in \N$, let $F_n : [-n,n] \rightarrow \R$ be defined by 
\[
F_n(y) = \int_{-n}^y f(t) dt.
\]
Since $f$ is computable, $F_n$ is computable uniformly in $n$.  Thus, there is a uniformly 
computable sequence $\{g_n\}_{n \in \N}$ of functions so that $g_n$ is a modulus of continuity
for $F_n$ for each $n \in \N$.  Let $h(k) = g_{a(k+1)}(k+1)$.  

Since $a$ is computable and $\{g_n\}_{n \in \N}$ is uniformly computable, it follows that $h$ is computable.  
We now claim that $h$ is a modulus of continuity for $F$.   
To see this, let $y_1, y_2 \in \R$, and suppose $|y_1 - y_2| < 2^{-h(k)}$.  Without loss of generality, 
suppose $y_1 < y_2$.  Let $y_1' = \max\{-a(k+1),y_1\}$, and let 
$y_2' = \min\{y_2, a(k+1)\}$.  Then, by inspection of cases, 
\[
|F(y_1) - F(y_2)| ^ \leq \int_{-\infty}^{a(k+1)} |f(t)| dt + |F(y_2') - F(y_1')| + \int_{a(k+1)}^\infty |f(t)| dt.
\]
Since $|y_1 - y_2| < 2^{-h(k)}$, $|y_1' - y_2'| < 2^{-h(k)}$.  Thus, 
$|F(y_2') - F(y_1')| < 2^{-(k+1)}$ by definition of $h$.  Therefore, by definition of $a$, 
$|F(y_2) - F(y_1)| < 2^{-k}$.  

We now show that $\{F(q)\}_{q \in \Q}$ is computable.  Let $q \in \Q$, and let $k \in \N$.  
Compute $k' \in \N$ so that 
$-a(k'+1) < q$.  Compute $q_1 \in \Q$ so that 
\[
\left|q_1 - \int_{-a(k'+1)}^q f(t) dt \right| < 2^{-(k+1)}.
\]
Then, $|q_1 - F(q)| < 2^{-k}$.  

Thus, $F$ is computable.
\end{proof}

The following more or less effectivizes a well-known trick from measure theory, and will be used for calculating absolute moments of random variables.

\begin{lemma}\label{lm:comp.int.dist}
Let $X$ be a random variable, and suppose $\psi(x) = P[|X| \geq x]$ for all $x \geq 0$.  
If $\psi$ is computable, and if there is a computable and computably integrable $h :[0,\infty) \rightarrow \R$ so that 
$\psi(x) \leq h(x)$ for all sufficiently large $x$, then $E[|X|]$ is computable.
\end{lemma}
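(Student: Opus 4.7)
My plan is to reduce the lemma to Lemmas \ref{lm:dom.comp.int} and \ref{lm:comp.int.int.comp} by way of the classical identity expressing $E[|X|]$ as the integral of the upper tail. Concretely, Tonelli's theorem applied to the nonnegative integrand $(x,\omega) \mapsto \mathbf{1}_{[0,|X(\omega)|)}(x)$ on $[0,\infty) \times \Omega$ yields
\[
E[|X|] = \int_0^\infty P[|X| > x] \, dx = \int_0^\infty \psi(x) \, dx,
\]
where the second equality uses that $P[|X| = x] > 0$ for at most countably many $x$, so replacing $>$ by $\geq$ in the tail does not alter the Lebesgue integral.

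Next, since the earlier integration lemmas are stated for functions defined on all of $\R$, I would pass to the even reflections $\tilde \psi(x) := \psi(|x|)$ and $\tilde h(x) := h(|x|)$ for $x \in \R$. Both extensions are computable because $x \mapsto |x|$ is computable and $\psi$, $h$ are computable, and $\tilde h$ is computably integrable with $\int_\R \tilde h \, dm = 2 \int_0^\infty h \, dm$. From the hypothesis, there is some $a \geq 0$ with $\psi(x) \leq h(x)$ for $x \geq a$, which translates directly to $|\tilde \psi(x)| \leq \tilde h(x)$ whenever $|x| \geq a$.

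With these ingredients in place, Lemma \ref{lm:dom.comp.int} shows that $\tilde \psi$ is computably integrable on $\R$, and then Lemma \ref{lm:comp.int.int.comp} shows that $\int_\R \tilde \psi \, dm$ is computable. Since $\int_\R \tilde \psi \, dm = 2 \int_0^\infty \psi(x) \, dx = 2 E[|X|]$, dividing by $2$ produces the required computable approximations of $E[|X|]$.

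There is no serious obstacle here beyond this reduction; the key conceptual move is recognizing that the tail-integral representation of $E[|X|]$ is precisely what makes the hypothesis on $\psi$ the correct input for Lemma \ref{lm:dom.comp.int}. The only minor technical point is that the hypothesis dominates $\psi$ by $h$ only for sufficiently large $x$, which is exactly the form of domination that Lemma \ref{lm:dom.comp.int} accommodates.
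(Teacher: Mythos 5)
Your proposal is correct and follows the same strategy as the paper's proof: both reduce $E[|X|]$ to the tail integral $\int_0^\infty \psi(x)\,dx$ and then apply Lemma~\ref{lm:dom.comp.int}. If anything, you are more careful than the paper, which cites only Lemma~\ref{lm:dom.comp.int} and omits the step of passing to Lemma~\ref{lm:comp.int.int.comp} to get computability of the integral itself, and which silently identifies $[0,\infty)$ with $\R$; your even-reflection device and your remark about the negligible difference between $P[|X|>x]$ and $P[|X|\geq x]$ address exactly those small gaps.
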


\begin{proof}
By a standard measure theory result, 
\[
E[|X|] = \int_0^\infty \psi(x) dx.
\]
It now follows from Lemma \ref{lm:dom.comp.int} that $E[|X|]$ is computable.
\end{proof}

\subsection{Effective presentations of Banach spaces}\label{sec:prelim::subsec:eff.pres}

Our first goal is to prove the following effective version of a well-known classical result on finite-dimensional Banach spaces.

\begin{proposition}\label{prop:fin.dim.ball}
If $\B^\#$ is a computable presentation of a finite-dimensional Banach space, then the closed unit ball of $\B$ is a computably compact set of $\B^\#$.
\end{proposition}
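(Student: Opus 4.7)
The plan is to identify $\B$ with $\F^n$ (where $n = \dim \B$) by extracting a basis from the distinguished vectors, apply Proposition \ref{prop:HB} in $\F^n$, and then push forward the computable compactness through the resulting coordinate isomorphism.

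For Step 1, I would computably find distinguished vectors $w_1, \ldots, w_n$ forming a basis of $\B$. Since the distinguished vectors are linearly dense and $\B$ has finite dimension, their linear span is all of $\B$, so a basis exists among them. The key effective observation is that linear independence of a tuple $v_{i_1}, \ldots, v_{i_k}$ is c.e.\ verifiable from the presentation: the map $\alpha \mapsto \norm{\sum_j \alpha_j v_{i_j}}_\B$ is Lipschitz on $\F^k$ with a computable modulus (obtained from a computable upper bound on $\max_j \norm{v_{i_j}}_\B$), so its infimum $c$ over the $\ell^\infty$-unit sphere can be bounded from below to arbitrary accuracy via finite-grid evaluation. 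Since $c > 0$ is precisely the condition for linear independence, enumerating $n$-tuples and waiting for the confirmation $c > 0$ must succeed; any resulting tuple is automatically a basis since its size equals $\dim \B$.

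For Steps 2 and 3, I define $T : \F^n \to \B$ by $T\alpha = \sum_j \alpha_j w_j$. This is a computable linear map, with computable modulus of continuity $\norm{T\alpha - T\beta}_\B \leq n \norm{\alpha - \beta}_\infty \max_j \norm{w_j}_\B$. I then argue $K := T^{-1}[\{v : \norm{v}_\B \leq 1\}] = \{\alpha \in \F^n : \norm{T\alpha}_\B \leq 1\}$ is computably compact in $\F^n$ by checking the hypotheses of Proposition \ref{prop:HB}. Boundedness follows from the c.e.\ positive lower bound $c$ on $\alpha \mapsto \norm{T\alpha}_\B$ over the $\ell^\infty$-unit sphere, which gives $K \subseteq \overline{B(0, 1/c)}$. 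The complement is c.e.\ open as the preimage of $(1, \infty)$ under the computable norm map. And $K$ is c.e.\ closed because its interior $\{\alpha : \norm{T\alpha}_\B < 1\}$ is c.e.\ open and dense in $K$ (any boundary point $\alpha$ is the limit of $t\alpha$ as $t \uparrow 1$, which lies in the interior), so rational balls meeting $K$ are c.e.\ enumerable by testing for rational interior points within them.

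Finally, since the closed unit ball of $\B$ equals $T[K]$ and $T$ is a computable map, $T[K]$ is computably compact in $\B^\#$: effective continuity of $T$ combined with the c.e.\ structure of $K$ makes the predicate ``is a valid rational-ball cover of $T[K]$'' c.e., by pulling cover- and meeting-questions back to $\F^n$ via Proposition \ref{prop:comp.map.sets} and the fact that computably compact sets possess a computable dense sequence. The principal obstacle I anticipate is Step 1: the c.e.\ verification of linear independence via quantitative compactness, and ensuring termination of the basis search, which implicitly uses $n = \dim \B$ as advice. This is permissible since the proposition concerns a fixed presentation rather than a uniform result; the question of whether $n$ itself is computable from an index of $\B^\#$ is separate and more delicate.
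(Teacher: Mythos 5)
Your overall strategy is the same as the paper's: fix a basis to identify $\B$ with $\F^n$, pull the unit ball back to a set $K=T^{-1}[\mathbf B]$ in $\F^n$, verify the hypotheses of Proposition~\ref{prop:HB} to get computable compactness of $K$, and push forward through $T$. The paper's proof differs in a few small tactical ways worth noting. For the basis step, the paper does not attempt a computable search at all: it invokes Lemma~\ref{lm:rational.lin.indep}, which produces $n$ linearly independent \emph{rational} vectors (rather than distinguished vectors), and simply fixes them as non-uniform data; your c.e.\ verification of linear independence via grid evaluation is correct but is extra work you don't need, as you yourself observe. For c.e.\ closedness of $K$, the paper first shows $\mathbf B$ is c.e.\ closed in $\B^\#$ (a rational ball $B(v;r)$ meets $\mathbf B$ iff $\norm{v}_\B - r < 1$) and then transports this through the computable inverse $S=T^{-1}$ via Proposition~\ref{prop:comp.map.sets}, whereas you argue directly on $K$ via density of its interior $\{\alpha : \norm{T\alpha}_\B < 1\}$; both are valid. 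Your explicit lower bound $c>0$ on the $\ell^\infty$-unit sphere is a cleaner way to get boundedness of $K$ than the paper's appeal to boundedness of $S$. Finally, your last step (pushing computable compactness of $K$ forward to $T[K]=\mathbf B$ via the computability of $T$) is explicitly spelled out; the paper's proof as written actually ends with ``$V$ is computably compact'' and leaves that final transfer implicit, so your version is arguably the more complete one.
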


We will need the following lemma.

\begin{lemma}\label{lm:rational.lin.indep}
Let $n$ be a positive integer.  
Suppose $\B^\#$ is a computable presentation of a Banach space whose dimension is at least $n$.  
Then, there exist $n$ rational vectors $v_1$, $\ldots$, $v_n$ of $\B^\#$ so that 
$\{v_1, \ldots, v_n\}$ is linearly independent.
\end{lemma}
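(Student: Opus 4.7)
The plan is to proceed by induction on $k = 1, \ldots, n$, building a linearly independent rational sequence $v_1, \ldots, v_k$ at each stage. The two key ingredients are: (i) the rational vectors of $\B^\#$ are dense in $\B$ (since $\{v_n\}_{n \in \N}$ is linearly dense and rational linear combinations are dense in real/complex linear combinations), and (ii) any finite-dimensional subspace of a Banach space is closed, hence the distance from any vector outside such a subspace to the subspace is strictly positive.

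For the base case $k=1$, since $\dim \B \geq n \geq 1$, the space $\B$ contains a nonzero vector, and by density there is a nonzero rational vector $v_1$. For the inductive step, suppose rational vectors $v_1, \ldots, v_{k-1}$ have been chosen so that they are linearly independent, where $k \leq n$. Let $V = \operatorname{span}\{v_1, \ldots, v_{k-1}\}$, so $\dim V = k-1 < k \leq \dim \B$. Pick any $w \in \B \setminus V$. Because $V$ is finite-dimensional it is closed, so $d := \inf\{\norm{w - u}_\B : u \in V\} > 0$. By density of rational vectors, choose a rational vector $v_k$ with $\norm{v_k - w}_\B < d/2$.

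To finish, I verify $v_k \notin V$: for every $u \in V$,
\[
\norm{v_k - u}_\B \geq \norm{w - u}_\B - \norm{w - v_k}_\B \geq d - d/2 = d/2 > 0,
\]
so $v_k$ cannot lie in $V$. Hence $\{v_1, \ldots, v_k\}$ is linearly independent, completing the induction.

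There is no serious obstacle here — the result is purely existential (not uniform or effective), so no explicit algorithm producing the $v_i$ is required. The only point that deserves mention is invoking the classical fact that finite-dimensional subspaces of Banach spaces are closed, which guarantees the strict positivity of the distance $d$ and lets the density argument go through.
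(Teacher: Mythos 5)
Your proof is correct and rests on the same two facts the paper uses: rational vectors are dense in $\B$, and finite-dimensional subspaces of a Banach space are closed. The paper packages this as a maximality/contradiction argument (take a maximal linearly independent set of rational vectors, note its span $V$ is a closed proper subspace, and then by density a rational vector lies in the open set $\B \setminus V$), whereas you build the set up by induction; these are two framings of the same argument. One small simplification available to you: once you know $V$ is closed and proper, $\B \setminus V$ is a nonempty open set, so density immediately gives a rational vector in it — the explicit distance estimate with $d/2$ is correct but not needed.
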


\begin{proof}
By way of contradiction suppose otherwise.  Let $m$ be the largest natural number so that there exists 
a linearly independent set of $m$ rational vectors of $\B^\#$.  Thus, $m < n$.  
Let $E$ be a linearly independent set of $m$ rational vectors of $\B^\#$, and let 
$V$ denote the linear span of $E$.  Thus, since $\dim(\B) > m$, $\B - V \neq \emptyset$.  
Since $V$ is a finite-dimensional subspace of $\B$, $V$ is closed, and so $\B - V$ is open.   
Thus, since the rational vectors of $\B^\#$ are dense in $\B$, $\B - V$ contains a rational vector 
$v$ of $\B^\#$.  Hence, $E \cup \{v\}$ is a linearly independent set of rational vectors of $\B^\#$, and 
$E \cup \{v\}$ contains $m+1$ vectors- a contradiction.
\end{proof}

\begin{proof}[Proof of Proposition \ref{prop:fin.dim.ball}]
Suppose $\B^\#$ is a computable presentation of a finite-dimensional Banach space, and let $n = \dim(B)$.  Let $\mathbf{B}$ denote the closed unit ball of $\B$. 

By Lemma \ref{lm:rational.lin.indep} , there exist $n$ rational vectors $v_1, \ldots, v_n$ of $\B^\#$ so that $\{v_1, \ldots, v_n\}$ is linearly independent.  Thus, $\{v_1, \ldots, v_n\}$ is a basis of $\B$.

For all $(\alpha_1, \ldots, \alpha_n) \in \F^n$, let $T(\alpha_1, \ldots, \alpha_n) = \alpha_1v_1 + \ldots + \alpha_n v_n$.  
Thus, $T$ is an isomorphism of $\F^n$ onto $\B$.  Since $v_1, \ldots, v_n$ are computable vectors of $\B^\#$, it follows that 
$T$ is a computable map of $\F^n$ onto $\B^\#$.

Let $S = T^{-1}$.  Since $T$ is a computable map of $\F^n$ onto $\B^\#$, $S$ is a computable map of $\B^\#$ onto $\F^n$.  
Let $V = S[\mathbf{B}]$.  Since $S$ is bounded, $V$ is bounded.

We claim that $\F^n - V$ is c.e. open.  To see this, note that since $T$ is bijective, $\F^n - V = S[\B - \mathbf{B}]$.  Since $\B - \mathbf{B}$ is the union of all rational open balls 
$B(v;r)$ of $\B^\#$ so that $\norm{v}_\B - r > 1$, $\B - \mathbf{B}$ is computably open.  By Proposition \ref{prop:comp.map.sets}, 
$\F^n - V$ is computably open.  

We now show $V$ is c.e. closed. 
By definition of $\mathbf{B}$, if $B(v;r)$ is a rational ball of $\B^\#$, then $B(v;r) \cap U \neq \emptyset$ if and only if $\norm{v}_\B - r < 1$.  
Thus, $\mathbf{B}$ is c.e. closed.  Since $V = S[\mathbf{B}]$, it now follows from Proposition \ref{prop:comp.map.sets} that $V$ is c.e. closed.  

It now follows from Proposition \ref{prop:HB} that $V$ is computably compact.
\end{proof}

We conclude this section by introducing the concept of a uniformly computably presentable sequence of 
Banach spaces.  In addition, we prove a useful result for encoding a c.e. set into such a sequence. 

\begin{definition}\label{def:seq.pres.unif.comp}
\begin{enumerate}
	\item A sequence $(\B_e^\#)_{e \in \N}$ of Banach space presentations is \emph{uniformly computable} if there is a computable $g: \N \rightarrow \N$ so that $g(e)$ is an index of $\B_e$ for all $e$.  
	
	\item A sequence $(\B_e)_{e \in \N}$ of Banach spaces is \emph{uniformly computably presentable} if there is a uniformly computable sequence $(\B_e^\#)_{e \in \N}$ of presentations.
\end{enumerate}
\end{definition}

\begin{proposition}\label{prop:pairs.banach}
Suppose $\B_0^\#$ and $\B_1^\#$ are computable presentations of Banach spaces.  
Let $A \subseteq \N$ be c.e., and for all $e \in \N$, let 
\[
\mathcal{A}_e = \left\{
\begin{array}{cc}
\B_1 & e \not \in A\\
\B_0 & e \in A\\
\end{array}
\right.
\]
If there is a computable isometric embedding of $\B_1^\#$ into $\B_0^\#$, then $(\mathcal{A}_e)_{e \in \N}$ is uniformly computably presentable.
\end{proposition}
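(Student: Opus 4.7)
The plan is to exhibit each $\mathcal{A}_e$ by distinguished vectors that live inside $\B_0$, so that the desired $\mathcal{A}_e$-norm agrees with the $\B_0$-norm via the embedding $T$. Let $T \colon \B_1 \to \B_0$ denote the given computable isometric embedding, let $\{v_n\}_{n \in \N}$ and $\{w_n\}_{n \in \N}$ be the distinguished sequences of $\B_0^\#$ and $\B_1^\#$, respectively, fix a uniformly computable enumeration $\{A_s\}_{s \in \N}$ of $A$, and fix a computable bijective pairing $\langle \cdot, \cdot \rangle \colon \N^2 \to \N$. I would then declare the $\langle n, k \rangle$-th distinguished vector of $\mathcal{A}_e^\#$ to be
\[
u_{\langle n, k \rangle}^{(e)} = \begin{cases} v_n, & \text{if } e \in A_{\langle n, k \rangle}, \\ T(w_k), & \text{otherwise.} \end{cases}
\]
This assignment is final rather than stagewise: it depends only on whether $e$ lies in the finite set $A_{\langle n, k \rangle}$, a condition that is decidable uniformly in $e$, $n$, and $k$.

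Next I would verify that $\mathcal{A}_e^\#$ presents the correct Banach space. If $e \notin A$, every $u_{\langle n, k \rangle}^{(e)}$ equals $T(w_k)$, so the closed span of the distinguished vectors is $T[\B_1]$, which is isometric to $\B_1$ via $T$. If $e$ is first enumerated at some stage $s_0$, then for $\langle n, k \rangle \geq s_0$ we have $u_{\langle n, k \rangle}^{(e)} = v_n$; since every $n$ occurs as the first coordinate of $\langle n, k \rangle$ for infinitely many $k$, each $v_n$ eventually appears as a distinguished vector, so the closed span contains $\{v_n\}_{n \in \N}$ and hence equals $\B_0$.

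The norm algorithm is then immediate. Given a rational combination $\sum_i \alpha_i u_{\langle n_i, k_i \rangle}^{(e)}$, one runs $\langle n_i, k_i \rangle$ stages of the enumeration of $A$ for each $i$ to decide whether $u_{\langle n_i, k_i \rangle}^{(e)}$ is $v_{n_i}$ or $T(w_{k_i})$. Since $T$ is a computable map of $\B_1^\#$ into $\B_0^\#$ and the $v_{n_i}$ are rational vectors of $\B_0^\#$, the resulting sum is a computable vector of $\B_0^\#$ whose norm can be approximated to within $2^{-k}$ using the computable presentation of $\B_0$. Because $T$ is an isometry, this $\B_0$-norm coincides with the intrinsic norm of $\mathcal{A}_e$ in both cases, and the whole procedure is uniform in $e$, yielding the index function required by Definition \ref{def:seq.pres.unif.comp}.

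The one mildly delicate point is ensuring density of $\{v_n\}_{n \in \N}$ in the span when $e$ enters $A$ at a very late stage $s_0$; this is precisely why the construction uses a pairing function rather than a direct diagonal enumeration, so that every $n$ retains infinitely many chances to appear as a first coordinate past any fixed stage. Beyond this bookkeeping, the rest of the verification is routine.
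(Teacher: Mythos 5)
Your construction is correct and achieves the same goal as the paper's proof, with one presentational variant worth flagging. The paper declares the $n$-th distinguished vector of $\mathcal{A}_e^\#$ to be $v^1_n$ (an actual vector of $\B_1$) when $e\notin A$, $T(v^1_n)$ when $e\in A - A_n$, and $v^0_{(n)_0}$ when $e\in A_n$; this keeps the distinguished vectors inside $\mathcal{A}_e$ literally, at the cost of making the case distinction undecidable (one cannot tell $e\notin A$ from $e\in A - A_n$). The norm algorithm then works because $T$ is an isometry, so computing as if the vector were $T(v^1_n)$ gives the right value in either case. You instead make the assignment fully decidable by always placing the distinguished vectors inside $\B_0$: when $e\notin A$ your distinguished vectors are $T(w_k)\in\B_0$, so your presentation is literally one of the subspace $T[\B_1]$ rather than of $\B_1$ itself. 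Since $T[\B_1]$ is isometrically isomorphic to $\B_1$ and computable presentability is invariant under isometric isomorphism, this is harmless for the proposition and for its use in Theorem~\ref{thm:no.uniform}; alternatively, you could replace $T(w_k)$ by $w_k$ as the named distinguished vector (making the vector itself not computably determined) while keeping exactly the same norm algorithm, which is essentially what the paper does. Your density argument for the $e\in A$ case and your use of a pairing to guarantee every $v_n$ eventually appears mirror the paper's use of $(n)_0$. In short: same idea, slightly cleaner bookkeeping, and the only thing you silently assume is the (standard) identification of a Banach space with its isometric image.
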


\begin{proof}
Fix a computable isometric embedding $T$ of $\B_1^\#$ into $\B_0^\#$.  Let $v_n^j$ denote the $n$-th distinguished vector of $\B_j^\#$.  Let $(A_s)_{s \in \N}$ be a 
computable enumeration of $A$.  For all $e,n$, let 
\[
u_n^e = \left\{
\begin{array}{cc}
v_n^1 & e \not \in A\\
T(v_n^1) & e \in A - A_n\\
v_{(n)_0}^0 & e \in A_n\\
\end{array}
\right.
\]
It follows that $(u_n^e)_{n \in \N}$ is linearly dense in $\mathcal{A}_e$.  Let 
$\mathcal{A}_e^\# = (\mathcal{A}_e, (u_n^e)_{n \in \N})$.  

We show $(\mathcal{A}_e^\#)_{e \in \N}$ is uniformly computable.  By the \emph{s-m-n} Theorem, 
it suffices to show there is a computable function 
$f : \N^2 \times \F_\Q^{< \omega} \rightarrow \Q$ so that for all $e,k \in \N$ and all 
$\alpha \in \F_\Q^{< \omega}$, $|f(e,k,\alpha) - \norm{\sum_{j < |\alpha|} \alpha(j) u_j^e}_{\mathcal{A}_e} | < 2^{-k}$.

Let $\mathcal{R}_j$ denote the set of all rational vectors of $\B_j^\#$.  Since 
$\B_0^\#$ is computable, there is a computable $g : \N \times \mathcal{R}_0 \rightarrow \Q$ so that 
$|g(k,v) - \norm{v}_{\B_0}| < 2^{-k}$ for all $k \in \N$ and all $v \in \mathcal{R}_0$.  Since $T$ is 
computable, there is a computable $h : \N \times \mathcal{R}_1 \rightarrow \mathcal{R}_0$ so that 
$|\norm{T(v)}_{\B_0} - h(k,v)| < 2^{-k}$ for all $k \in \N$ and all $v \in \mathcal{R}_1$.  Let 
\[
f(e,k, \alpha) = g(k+1, h(k+1, \sum_{j < |\alpha|} \alpha(j) ( (1 - \chi_{A_j}(e)) v_j^1 + \chi_{A_j}(e) v_j^0))).
\]
By definition, $f$ is computable.  Let $e,k \in \N$, and let $\alpha \in \F_\Q^{<\omega}$.  Set:
\begin{eqnarray*}
\mathbf{x}_0 & = & \sum_{j < |\alpha|} \alpha(j) \chi_{A_j}(e) v_j^0 \\
\mathbf{x}_1 & = & \sum_{j < |\alpha|} \alpha(j) (1 - \chi_{A_j}(e)) v_j^1\\
\mathbf{x}_2 & = & h(k+1, \mathbf{x}_1)\\
\mathbf{y} & = & \sum_{j < |\alpha|} \alpha(j) u_j^e\\
q & = & g(k+1, \mathbf{x}_2 + \mathbf{x}_0)
\end{eqnarray*}
Thus, $q = f(e,k,\alpha)$.  \\

\noindent\it Case 1:\rm\ $e \not \in A$. \\

Thus, $\mathcal{A}_e = \mathcal{B}_1$, $\mathbf{x}_0 = \zerovec_{\B_0}$, and 
$\mathbf{y} = \mathbf{x}_1$.  Therefore, 
\begin{eqnarray*}
|q - \norm{\mathbf{y}}_{\mathcal{A}_e}| & = & |q - \norm{\mathbf{x}_1}_{\B_1}| \\
& = & |q - \norm{T(\mathbf{x}_1)}_{\B_0}| \\
& \leq & |q - \norm{\mathbf{x}_2}_{\B_0}| + |\norm{\mathbf{x}_2}_{\B_0} - \norm{T(\mathbf{x}_1)}_{\B_0}| \\
& < & 2^{-(k+1)} + 2^{-(k+1)} = 2^{-k}.
\end{eqnarray*}

\noindent\it Case 2:\rm\ $e \in A$. \\

Thus, $\mathcal{A}_e = \B_0$, and $\mathbf{y} = T(\mathbf{x}_1) + \mathbf{x}_0$.  
We have, 
\[
|q - \norm{T(\mathbf{x}_1) + \mathbf{x}_0}_{\mathcal{B}_0}| \leq
 |q - \norm{\mathbf{x}_2 + \mathbf{x}_0}_{\B_0}| + 
 |\norm{\mathbf{x}_2 + \mathbf{x}_0}_{\B_0} - \norm{T(\mathbf{x}_1) + \mathbf{x}_0}_{\B_0}|.
\]
By definition of $g$, $|q - \norm{\mathbf{x}_2 + \mathbf{x}_0}_{\B_0}| < 2^{-(k+1)}$.  
By definition of $h$, 
\[
|\norm{\mathbf{x}_2 + \mathbf{x}_0}_{\B_0} - \norm{T(\mathbf{x}_1) + \mathbf{x}_0}_{\B_0}| \leq 
\norm{\mathbf{x}_2 - T(\mathbf{x}_1)}_{\B_0} < 2^{-(k+1)}.
\]
So, again $|q - \norm{\mathbf{y}}_{\mathcal{A}_e}| < 2^{-k}$.
\end{proof}

\section{Exponents larger than $2$}\label{sec:exp.>.2}

\begin{theorem}\label{thm:p.geq.2}
If $2 \leq p < \infty$, and if there is a computable presentation of an $L^p$ space whose dimension is at least 2, then $p$ is computable.
\end{theorem}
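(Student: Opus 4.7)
The plan is to combine Theorem~\ref{thm:krivine} with Theorem~\ref{thm:non.embed} and the left-c.e.\ result of Brown--McNicholl--Melnikov to compute $p$. Since $p=2$ is trivial, assume $p>2$, and write $\B$ for the computably presented space, so $\B\cong L^p$.

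For each rational $q\geq 2$, I will semi-decide ``$\B\not\hookrightarrow L^q$'' via Krivine: enumerate rational vectors $v_1,\ldots,v_n$ of $\B^\#$ and rational scalars $\alpha_1,\ldots,\alpha_n$ with $\sum_j\alpha_j=1$, pick positive integers $r,s$ with $2(r-1)<q<2r\leq 4s$, and compute
\[
(-1)^r \sum_{\sigma\in\{-1,1\}^{2s}} \sum_{\tau\in\{1,\ldots,n\}^{2s}} \Bigl\| \sum_{j<2s} \sigma(j)\, v_{\tau(j)} \Bigr\|_\B^{\,q} \prod_{j<2s} \alpha_{\tau(j)}
\]
to rational precision. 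A certified strictly negative value witnesses $\B\not\hookrightarrow L^q$ by Theorem~\ref{thm:krivine}. By Theorem~\ref{thm:non.embed} (with the finite-dimensional case absorbed either by the analogous non-embedding for $\ell^p_n$ with $n\geq 2$, $p>2$, or by running the Section~\ref{sec:fin.dim} algorithm in parallel), the semi-decision halts precisely when $q\neq p$. Since the Krivine sum is continuous in $q$ for fixed rational vectors/scalars, each successful witness extends to an open rational interval on which $\B\not\hookrightarrow L^{q'}$, and the union of all such intervals is a c.e.\ open set $U\subseteq[2,\infty)$ with $[2,\infty)\setminus U=\{p\}$; equivalently, $\{p\}$ is a $\Pi^0_1$ closed subset of $[2,\infty)$.

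Finally, I combine the left-c.e.\ approximation $p_n^-\uparrow p$ of BMM with the c.e.\ open enumeration $V_s\nearrow U$ to compute $p$ to arbitrary precision. For any $\epsilon>0$, both compact sets $[p_s^-+\epsilon/4,\,p-\epsilon/4]$ (once $p_s^-<p-\epsilon/2$) and $[p+\epsilon/4,\,p+\epsilon/2]$ lie in $U$ and are therefore eventually contained in $V_s$; once this has happened and $p_s^->p-\epsilon/4$, the connected component of $V_s^c$ meeting the interval $[p_s^-,\,p+\epsilon/2]$ has diameter at most~$\epsilon$ and contains~$p$, so any rational in it is within~$\epsilon$ of~$p$.

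The main technical obstacle is that, at any finite stage, $V_s^c\cap[p_s^-,\infty)$ may consist of several candidate gaps and we must identify which one contains~$p$. My plan is to track, across stages, the leftmost connected component of $V_s^c$ with left endpoint strictly greater than $p_s^-$; any transient component that does not contain~$p$ is a compact subset of $U$ and is therefore absorbed into $V_s$ at a later stage, so in the limit only the true $p$-gap persists and its right endpoint decreases to~$p$. A careful staging argument---for instance, taking the running minimum across stages of this right endpoint, defined to be $+\infty$ at stages where no such component is available---should convert this limiting behaviour into an effective right-c.e.\ approximation of~$p$, which combined with the BMM left-c.e.\ approximation yields computability of~$p$.
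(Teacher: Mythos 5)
Your first paragraph is on the same track as the paper: use Krivine (Theorem~\ref{thm:krivine}) to semi-decide the failure of an isometric embedding into $L^q$, and the Banach--Paley non-embedding result (Theorem~\ref{thm:non.embed}) to guarantee that this semi-decision succeeds exactly when $q\neq p$. Where you diverge is in how you extract $p$ from this. The paper's proof \emph{fixes} integers $r,s$ (non-uniformly, as part of the description of the algorithm) with $2(r-1)<p<2r\leq 4s$, and then works entirely inside the compact interval $[2(r-1),2r]$. The c.e.\ collection $\mathcal V$ of witness intervals then has $\bigcup\mathcal V\cap[2(r-1),2r]=[2(r-1),2r]\setminus\{p\}$, and because $[2(r-1),2r]$ is computably compact, the algorithm can search for a finite subcover of $[2(r-1),2r]\setminus(q-2^{-k'},q+2^{-k'})$ -- a \emph{verifiable} certificate that pins $p$ into the interval $(q-2^{-k'},q+2^{-k'})$. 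You instead let $q$ range over all of $[2,\infty)$ and try to recover $p$ by combining the resulting $\Pi^0_1$ singleton structure on the non-compact set $[2,\infty)$ with the Brown--McNicholl--Melnikov left-c.e.\ approximation $p_s^-\uparrow p$.

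That last step has a genuine gap. Both ingredients push from the same side: $p_s^-\uparrow p$ gives only lower bounds, and a $\Pi^0_1$ singleton in the non-compact $[2,\infty)$ (with basic opens being bounded rational balls, as in the paper's definition of c.e.\ open) does not by itself yield an upper bound for $p$ -- the "high" intervals of $V_s$ that lie entirely above $p$ cannot be effectively separated from the "low" ones, so one cannot extract a decreasing sequence of rationals $\geq p$. Your concrete procedure fails accordingly: consider $p=7$, a stage $s$ with $p_s^-=2.5$ and $V_s=(3,4)\cup(5,6)$. The leftmost component of $V_s^c$ with left endpoint $>p_s^-$ is $[4,5]$, whose right endpoint $5$ is \emph{below} $p$, so the proposed running minimum of right endpoints becomes $\leq 5$ and, being monotone, can never recover to $7$. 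The error is that "transient" gaps below $p$ contaminate the running minimum permanently; you have no certificate at any finite stage that the gap you are tracking is the $p$-gap rather than a low one that $V_t$ will later swallow. The repair is exactly the paper's device: fix $r,s$ once and for all (legitimate, since $p$ is a fixed parameter of the theorem and the algorithm need not be uniform), confine all searching to $[2(r-1),2r]$, and use computable compactness of this interval to obtain the needed finite subcovers -- after which the BMM left-c.e.\ result is not needed at all.
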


\begin{proof}
Suppose $2 \leq p < \infty$, and let $\B^\#$ be a computable presentation of an $L^p$ space $\B$ whose dimension is at least $2$.  
We can assume $p$ is not an integer.  Fix integers $r,s$ so that $2 \leq 2(r-1) < p < 2r \leq 4s$.  

Let $n$ be a positive integer.  For all $(x_1, \ldots, x_n) \in \R^n$, let $T_n(x_1, \ldots, x_n) = \sum_j x_j$.  
For all $v_1, \ldots, v_n \in \B$ and all $\alpha_1, \ldots, \alpha_n, q \in \R$, let 
\[
F_n(v_1, \ldots, v_n, \alpha_1, \ldots, \alpha_n, q) = (-1)^r \sum_{\sigma \in \{-1,1\}^{2s}} \sum_{\tau \in \{1, \ldots, n\}^{2s}} \norm{ \sum_{j < 2s} \sigma(j) v_{\tau(j)}}_{\B}^q \cdot \prod_{j < 2s} \alpha_{\tau(j)}.
\]
Thus, $F_n$ is a computable map from $(\B^n)^{\#} \times \R^{n+1}$ into $\R$.
Let $\mathcal{U}_n$ denote the set of all triples $(B_1, B_2, I)$ that satisfy the following conditions.
\begin{enumerate}
	\item $B_1$ is a rational ball of $(\B^n)^\#$.
	\item $B_2$ is a rational ball of $\R^n$.
	\item $I$ is an open rational interval.
	\item $F_n[B_1 \times B_2 \times I] \subseteq (-\infty, 0)$.
	\item $B_2 \cap T_n^{-1}[\{1\}] \neq \emptyset$.
\end{enumerate}

We now let 
\begin{eqnarray*}
\mathcal{V}_n & = & \{I\ :\ \exists B_1, B_2\ (B_1, B_2, I) \in \mathcal{U}_n\}\\
V_n & = & \bigcup \mathcal{V}_n\\
\mathcal{V} & = & \bigcup \mathcal{V}_n\\
V & = & \bigcup \mathcal{V}
\end{eqnarray*}

We first claim that $V \cap [2(r-1), 2r] = [2(r-1), 2r] - \{p\}$. 
To this end, we first show that $V \cap [2(r-1), 2r] \subseteq [2(r-1), 2r] - \{p\}$. 
Let $q \in V_n \cap [2(r-1), 2r]$.  Then, $q \in I$ for some $I \in \mathcal{V}_n$.  
Since $I \in \mathcal{V}_n$, there exists $B_1$, $B_2$ so that $(B_1, B_2, I) \in \mathcal{U}_n$.  
Thus, $F_n[B_1 \times B_2 \times I] \subseteq (-\infty, 0)$ and $B_2 \cap T_n^{-1}[\{1\}] \neq \emptyset$.  
Let $(v_1, \ldots, v_n) \in B_1$, and let $(\alpha_1, \ldots, \alpha_n) \in B_2 \cap T_n^{-1}[\{1\}]$.  
Then, $F_n(v_1, \ldots, v_n, \alpha_1, \ldots, \alpha_n, q) < 0$ and $\sum_j \alpha_j = 1$.  So, by Theorem \ref{thm:krivine}, $\B$ does not isometrically embed into an $L^q$ space.  Therefore, $q \neq p$, and so 
$V \cap [2(r-1), 2r] \subseteq [2(r-1), 2r] - \{p\}$. 

Now, conversely, let $q \in [2(r-1), 2r] - \{p\}$.  Thus, by Theorem \ref{thm:non.embed}, $\B$ does not isometrically embed into an $L^q$ space.  Hence, by Theorem \ref{thm:krivine}, there exists a 
positive integer $n$, vectors $v_1, \ldots, v_n \in \B$, and $(\alpha_1, \ldots, \alpha_n) \in T_n^{-1}[\{1\}]$ so that $F_n(v_1, \ldots, v_n, \alpha_1, \ldots, \alpha_n,q) < 0$.  By continuity, there exist 
$(B_1, B_2, I) \in \mathcal{U}_n$ so that $(v_1, \ldots, v_n) \in B_1$, $(\alpha_1, \ldots, \alpha_n) \in B_2$, and $q \in I$.  Thus, $q \in V$, and so $V \cap [2(r-1), 2r] \supseteq [2(r-1), 2r] - \{p\}$.

We now claim that for every $k \in \N$, there exists $k' \in \N$, $q \in \Q$, and $I_1, \ldots, I_m \in \mathcal{V}$ so that $k' \geq k$, $2(r-1) < q - 2^{-k'} < q + 2^{-k'} < 2r$, and 
$[2(r-1), 2r] - (q - 2^{-k'}, q + 2^{-k'}) \subseteq \bigcup_j I_j$.  
Let $k \in \N$.  There exists $q \in \Q$ and $k' \in \N$ so that 
$k' \geq k$ and $p \in (q - 2^{-k'}, q+ 2^{-k'})\subseteq (2(r-1), 2r)$.  By our first claim, if $s \in [2(r-1), q - 2^{-k'}] \cup [q + 2^{-k'}, 2r]$, then $s \in I$ for some $I \in \mathcal{V}$.  By compactness, there exist 
$I_1, \ldots, I_m \in \mathcal{V}$ so that $[2(r-1), q - 2^{-k'}] \cup [q + 2^{-k'}, 2r] \subseteq \bigcup_j I_j$.  \\

We can now demonstrate $p$ is computable.  Given $k \in \N$, wait for $k' \in \N$, $q \in \Q$, $I_1$, $\ldots$, $I_m \in \mathcal{V}$ 
so that $k' \geq k$, $2(r-1) < q - 2^{-k'} < q + 2^{-k'} < 2r$, and 
$[2(r-1), 2r] - (q - 2^{-k'}, q + 2^{-k'}) \subseteq \bigcup_j I_j$, and then output $q$.  
By our second claim, this search always terminates.  By our first claim, $p \in (q - 2^{-k'}, q+ 2^{-k'})$ and so $|q - p| < 2^{-k}$.  
\end{proof}

It follows from Lemma 6.2 of \cite{Brown.McNicholl.Melnikov.2019} that if $1 \leq p < 2$, and if there is a computable presentation of an $L^p$ space, then $p$ is right-c.e..

\section{Detour: computable stable random variables}\label{sec:comp.r.stable}

Our approach to proving that exponents of Lebesgue spaces can not be uniformly computed from presentations now requires consideration of the effective content of the basic theory of stable random variables.   We begin with a computable inversion theorem for characteristic functions.

\begin{theorem}\label{thm:comp.inv}
Every distribution whose characteristic function is computable and computably integrable has a computable density and a computable cumulative distribution function.  
\end{theorem}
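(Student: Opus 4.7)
The plan is to invoke the classical Fourier inversion theorem and then chase computability through the integral representations it provides. Recall that if $\phi$ is the characteristic function of a probability distribution and $\phi \in L^1(\R)$ --- which is exactly the condition supplied by the hypothesis that $\phi$ is computably integrable --- then the distribution admits a bounded continuous density given by
\[
f(x) = \frac{1}{2\pi}\int_{-\infty}^\infty e^{-itx}\phi(t)\,dt,
\]
and its cumulative distribution function is then $F(y) = \int_{-\infty}^y f(x)\,dx$. Both computability claims thus reduce to effectivizing these two integrals.

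For the density, I would define $G : \R^2 \to \C$ by $G(x,t) = \frac{1}{2\pi}e^{-itx}\phi(t)$ and set $\psi(t) = \frac{1}{2\pi}|\phi(t)|$. Since $\phi$ is computable, $G$ is computable on $\R^2$ and $\psi$ is computable on $\R$; by construction $|G(x,t)| \leq \psi(t)$ for every $(x,t)$. Since $\phi$ is computably integrable, so is $|\phi|$ (the $L^1$ norm coincides), and Lemma \ref{lm:comp.int.int.comp} applied to $\psi$ shows that $\int_\R \psi\,dm$ is computable. Lemma \ref{lm:int.fnc.comp}, whose hypothesis already allows a complex-valued integrand dominated by a real $\psi$, then delivers the computability of the map $x \mapsto f(x)$.

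For the cumulative distribution function, we now have a computable $f : \R \to \R$ with $f \geq 0$ and $\int_\R f\,dm = 1$, so $f$ is trivially computably integrable. Lemma \ref{lm:comp.int.func} then yields that $y \mapsto F(y) = \int_{-\infty}^y f(t)\,dt$ is computable, which completes the proof.

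No step is genuinely hard; the argument is essentially a bookkeeping exercise chaining the lemmas of Section \ref{sec:prelim::subsec:int} through the Fourier inversion identity. The only point that demands care is the matching of hypotheses for Lemma \ref{lm:int.fnc.comp}: one must observe that computable integrability of $\phi$ produces a computable dominating function $\psi$ whose integral is itself computable, a fact that follows cleanly from Lemma \ref{lm:comp.int.int.comp}. I do not foresee any other obstacle.
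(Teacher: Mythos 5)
Your proposal is correct and follows essentially the same route as the paper: invoke the Fourier inversion formula to get the density $f$, apply Lemma \ref{lm:int.fnc.comp} (with $\psi = \frac{1}{2\pi}|\phi|$ as the dominating function) to conclude $f$ is computable, and then apply Lemma \ref{lm:comp.int.func} to the computable and computably integrable $f$ to get computability of the CDF. If anything you are slightly more explicit than the paper in verifying the hypotheses of Lemma \ref{lm:int.fnc.exp} via Lemma \ref{lm:comp.int.int.comp}, and in including the $\frac{1}{2\pi}$ normalization that the paper silently drops.
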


\begin{proof}
Let $\phi : \R \rightarrow \C$ be computable, and 
suppose $\phi$ is the characteristic 
function of a probability measure $\mu$ on $\R$.  Let $F$ be the distribution function of $\mu$.  
For all $y \in \R$, let 
\[
f(y) = \int_{-\infty}^\infty e^{-iyt} \phi(t) dt.
\]
By Theorem 3.3.14 of \cite{Durrett.2010}, $f$ is the density of $F$.

Since $\phi$ is computably integrable, it follows from Lemma \ref{lm:int.fnc.comp} that 
$f$ is computable. 

Since $F$ is a distribution function for a probability measure on $\R$, and since $f$ is the density of $F$, $\int_\R f\ dm = 1$.  So, by Lemma \ref{lm:comp.int.func}, $F$ is computable. 
\end{proof}

\begin{corollary}\label{cor:comp.param}
If the parameters of a stable distribution are computable, then its density and distribution functions are computable.
\end{corollary}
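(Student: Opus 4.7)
The overall approach is to invoke Theorem \ref{thm:comp.inv} directly on the characteristic function $\phi_X$ of the given stable distribution. Thus the plan reduces to showing that $\phi_X$, as given by the explicit formula in Equation (\ref{eqn:cf}), is computable and computably integrable from the parameters $\alpha$, $\sigma$, $\beta$, $\delta$. I assume throughout that $\sigma > 0$; the edge case $\sigma = 0$ yields a degenerate point mass at $\delta$ for which no density exists and which can be treated separately.

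For computability of $\phi_X$, inspecting Equation (\ref{eqn:cf}) shows that the formula is built by composing $\exp$, real powers, multiplication, and addition together with the subexpressions $\sgn(t)$, $|t|^\alpha$, $\tan(\pi\alpha/2)$, and, in the $\alpha = 1$ branch, $\ln|t|$. The only concern is the behavior at $t=0$, where $\sgn$ is discontinuous and $\ln|t|$ is singular. In both branches, however, the offending factor appears only inside a product with a vanishing factor of $|t|$: the products $|t|^\alpha \sgn(t)$ and $|t|\sgn(t)\ln|t|$ each extend continuously to $0$ with value $0$, since $|t|^\alpha \to 0$ and $t\ln|t| \to 0$ as $t \to 0$. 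These extensions are computable because for $|t| < \epsilon$ the product is dominated by $\epsilon^\alpha$ or $\epsilon|\ln \epsilon|$ respectively, giving an effective modulus-of-continuity witness at the origin; away from $0$ everything is a straightforward composition of computable operations. Composing with $\exp$ preserves computability, so $\phi_X : \R \to \C$ is computable.

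For computable integrability, note that the purely imaginary contributions inside the $\exp$ have unit modulus, so $|\phi_X(t)| = \exp(-\sigma^\alpha |t|^\alpha)$ in both branches. The substitution $u = \sigma t$ yields
\[
\int_{-\infty}^{\infty} |\phi_X(t)|\, dt = \sigma^{-1}\int_{-\infty}^{\infty} \exp(-|u|^\alpha)\, du,
\]
which is computable by Lemma \ref{lm:int.e.t.r} combined with computability of $\sigma$. This furnishes the effective tail control needed to apply Theorem \ref{thm:comp.inv}, giving computability of both the density and the cumulative distribution function. The main obstacle is the bookkeeping at $t = 0$ in the computability step: one must verify that each potentially discontinuous or singular subexpression is suppressed by a vanishing power of $|t|$ and that the resulting functions admit effective approximations uniformly near the origin. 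The rest is a routine chain of closure-under-composition arguments together with one change-of-variables computation.
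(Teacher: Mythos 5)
Your proposal follows essentially the same route as the paper's proof: use Theorem \ref{thm:cf} to obtain the explicit characteristic function, appeal to Lemma \ref{lm:int.e.t.r} for computable integrability (your change of variables $u = \sigma t$ is exactly the reduction to that lemma), and then invoke the computable inversion theorem (Theorem \ref{thm:comp.inv}). You fill in more detail than the paper does, in two places that the paper leaves implicit: the handling of the singular subexpressions $\sgn(t)$ and $\ln|t|$ near $t=0$, which you correctly observe are suppressed by a vanishing factor and hence admit effective moduli of continuity; and the degenerate case $\sigma = 0$, where the distribution is a point mass and no density exists, so the corollary's conclusion only holds for $\sigma > 0$ -- a caveat the paper does not mention.
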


\begin{proof}
Let $\mu$ be a stable distribution whose parameters are computable.  Then, by Theorem \ref{thm:cf}, 
the characteristic function of $\mu$ is computable, and by Lemma \ref{lm:int.e.t.r} it is computably integrable.
Thus, by Theorem \ref{thm:comp.inv}, the density and distribution function of $\mu$ are computable.  
\end{proof}

We now address existence of computable random variables with a given stable distribution. 

\begin{theorem}\label{thm:comp.stable}
If $\mu$ is a stable distribution, and if the parameters of $\mu$ are computable, then there is a computable random variable on $(0,1)$ whose distribution is $\mu$.
\end{theorem}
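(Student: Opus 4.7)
The plan is to realize $X$ directly by applying the inversion scheme of Proposition \ref{prop:cdf.rv}: I would take the cumulative distribution function $F$ of $\mu$, define $g(t) = \sup F^{-1}[(0,t)]$ on $(0,1)$, and exhibit $g$ as a computable function from $(0,1)$ into $\R$. Since that proposition already guarantees $g$ has distribution $\mu$, computability of $g$ is all that remains. I would first dispose of the degenerate case: if $\mu$ is a point mass at some $c$, then by Theorem \ref{thm:cf} computability of the parameters forces $c$ to be computable, and the constant function $g \equiv c$ works. Henceforth one assumes $\mu$ is non-degenerate, so the scale parameter $\sigma > 0$.

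In the non-degenerate case, Corollary \ref{cor:comp.param} supplies a computable cdf $F$. I would then invoke the classical fact from \cite{Samorodnitsky.Taqqu.1994} that any non-degenerate stable density is continuous and strictly positive on all of $\R$, which promotes $F$ to a strictly increasing continuous bijection $\R \to (0,1)$; in this situation $g$ reduces to the ordinary inverse $F^{-1}$. To compute $g(t)$ from a name of $t$, I would proceed in two phases. First, bracket $g(t)$ in an interval $[-N,N]$: any name of $t$ eventually certifies rationals $0 < t^- < t^+ < 1$ with $t \in [t^-,t^+]$, and increasing $N$ until the computable $F$ certifies $F(-N) < t^-$ and $F(N) > t^+$ must succeed since $F(\pm\infty) = 0,1$. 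Second, run an adaptive trisection on this bracket: given $[a,b] \ni g(t)$, evaluate $F$ at the two trisection points $m_1 < m_2$; strict monotonicity forces $F(m_1) < F(m_2)$, so refining the precision of $F(m_1), F(m_2), t$ simultaneously will eventually certify at least one of the strict inequalities $F(m_1) < t$ or $F(m_2) > t$, allowing the bracket to be contracted to $[m_1,b]$ or $[a,m_2]$ respectively and shrunk by a factor of $2/3$. Iterating produces $g(t)$ to any desired precision.

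The main obstacle will be justifying strict positivity of the stable density, which is what secures strict monotonicity of $F$ and keeps the trisection productive; without it one would have to reason with the quantile function on plateaus of $F$, a substantially thornier matter. Fortunately this is a classical fact that can simply be cited, after which the remainder of the argument is a routine effectivization of the standard bisection-based inversion of a computable monotone bijection.
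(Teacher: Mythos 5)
Your overall plan matches the paper's: invoke Proposition \ref{prop:cdf.rv} to reduce the problem to computing $g(t) = \sup F^{-1}[(0,t)]$, get $F$ computable from Corollary \ref{cor:comp.param}, and effectivize the inversion of $F$. However, the ``classical fact'' you lean on---that any non-degenerate stable density is strictly positive on all of $\R$---is false. When $\alpha < 1$ and $\beta = \pm 1$, the distribution is totally skewed and one-sided: the density vanishes identically on a half-line $(-\infty,\delta]$ or $[\delta,\infty)$, so $F$ is a bijection only from a half-line onto $(0,1)$, not from $\R$. Since you explicitly flag strict positivity as the load-bearing point that ``keeps the trisection productive,'' this is a genuine gap, not a cosmetic one. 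The paper handles exactly this by defining an interval $I$ (equal to $\R$ except in the two one-sided cases, where it is the appropriate open half-line), citing Zolotarev for positivity of the density \emph{on $I$} and its vanishing off $I$, and proving $g = (F\restriction I)^{-1}$ rather than $g = F^{-1}$.

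It is worth noting that your trisection procedure would in fact still terminate in the one-sided cases, because for $t \in (0,1)$ one can never have $F(m_1) = F(m_2) = t$: the only plateaus of a stable cdf occur at levels $0$ and $1$. But your write-up does not make that observation---it derives termination from strict monotonicity of $F$ on $[m_1,m_2]$, which is precisely what fails. To close the gap you would either need to establish the weaker ``no interior plateaus'' fact, or follow the paper and first restrict to $I$ so that strict monotonicity really does hold on the domain you invert. Your separate treatment of the degenerate (point-mass) case is a reasonable addition that the paper glosses over.
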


\begin{proof}
Let $r$ denote the stability index of $\mu$.  Let $\sigma$, $\beta$, and $\delta$ denote the scale, skewness, and shift parameters of $\mu$ respectively.  
Let $F$ be the cumulative distribution function of $\mu$, and let $f$ be the density of $\mu$.  Set $g(t) = \sup F^{-1}[(0,t)]$ for each $t \in (0,1)$. By Proposition \ref{prop:cdf.rv}, $F$ is the distribution function of $g$. 
It now remains to show that $g$ is computable.

Let 
\[
I = \left\{\begin{array}{cc}
(\delta, \infty) & \alpha < 1\ \wedge\ \beta = 1\\
(-\infty, \delta) & \alpha < 1\ \wedge\ \beta = -1\\
\R & \mbox{otherwise}\\
\end{array}
\right.
\]
 By Remark 2.2.4 of \cite{Zolotarev.1986}, the density of $F$ is positive on $I$, and so $F$ is increasing on $I$.

Let $G = F |_I$. We claim that $g = G^{-1}$.  
To see this, we first show that $\ran(G) = (0,1)$; that is $F[I] = (0,1)$.  Since $F$ is a continuous distribution function, $(0,1) \subseteq \ran(F) \subseteq [0,1]$.  
By Remark 2.2.4 of \cite{Zolotarev.1986}, the density of $F$ is $0$ on $\R - I$.
Thus, $F[\R - I] \subseteq \{0,1\}$.  Therefore, $(0,1) \subseteq F[I]$. 
Since the density of $F$ is positive on $I$, $F[I]$ does not contain $0$ or $1$.  
Thus, $F[I] = (0,1)$.  

Now, let $t \in (0,1)$.  It suffices to show $G^{-1}(t) = \sup F^{-1}[(0,t)]$.  
To this end, we first show that if $a \in F^{-1}[(0,t)]$, then $a \leq G^{-1}(t)$. 
Let $a \in F^{-1}[(0,t)]$, and by way of contradiction suppose $a > G^{-1}(t)$.  
Since $F(a) > 0$, $a \in I$ (since the density of $F$ is zero on $\R - I$).  Since $F$ is increasing on $I$, 
$F(a) > F(G^{-1}(t)) = t$- a contradiction.  Thus, $a \leq G^{-1}(t)$.  

Now we show $G^{-1}(t) = \sup F^{-1}[(0,t)]$.  By what we have just shown, $G^{-1}(t) \geq \sup F^{-1}[(0,t)]$.  By way of contradiction, suppose $G^{-1}(t) > \sup F^{-1}[(0,t)]$.  
Suppose $G^{-1}(t) > a > \sup F^{-1}[(0,t)]$.  Then, $a \in I$.   So, $t > F(a)$ and thus 
$a \in F^{-1}[(0,t)]$- a contradiction. 

Finally, we demonstrate $g$ is computable.  Since the parameters of $\mu$ are computable, by Theorem \ref{thm:cf}, 
the characteristic function of $\mu$ is computable.  Thus, $F$ is computable by Theorem \ref{thm:comp.inv}.
Therefore, since $g = G^{-1}$, $g$ is computable.  
\end{proof}

We now attend to the computability of the absolute moments of stable random variables.

\begin{theorem}\label{thm:comp.stable.moment}
Let $X$ be an $r$-stable random variable, and suppose the parameters of the distribution of $X$ are computable.
\begin{enumerate}
	\item If $r < 2$, and if $p$ is a computable real so that $0 \leq p < r$, then 
	$E[|X|^p]$ is computable. 
	
	\item If $r = 2$, then $E[|X|^p]$ is computable for every nonnegative real $p$.
\end{enumerate}
\end{theorem}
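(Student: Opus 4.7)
The plan is to apply Lemma \ref{lm:comp.int.dist} to the non-negative random variable $Y = |X|^p$, for which $E[|Y|] = E[Y] = E[|X|^p]$. Thus it suffices to verify the two hypotheses of that lemma for $Y$: first, that the tail function $\psi(y) := P[|Y| \geq y] = P[|X| \geq y^{1/p}]$ is a computable function on $[0,\infty)$, and second, that $\psi(y) \leq h(y)$ for all sufficiently large $y$, for some computable, computably integrable $h : [0,\infty) \to \R$. The case $p = 0$ is trivial since $E[|X|^0] = 1$, so I assume $p > 0$.

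For computability of $\psi$: by Corollary \ref{cor:comp.param}, $X$ has a computable density and a computable cumulative distribution function $F$. The existence of a density forces $P[X = c] = 0$ for every $c \in \R$, so
\[
\psi(y) \;=\; P[X \geq y^{1/p}] + P[X \leq -y^{1/p}] \;=\; 1 - F(y^{1/p}) + F(-y^{1/p}),
\]
which is computable as a composition of $F$ with the computable map $y \mapsto y^{1/p}$.

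For the domination bound I split on the value of $r$. In case (1), with $r < 2$ and $0 < p < r$, I effectively produce a rational $q$ with $p < q < r$, which is possible because both $p$ and $r$ are computable. Part (2) of Theorem \ref{thm:tails} yields a (not necessarily computable) constant $C$ with $\psi(y) \leq 2Cy^{-r/p}$ for all sufficiently large $y$. Since $r > q$, the inequality $2Cy^{-r/p} \leq y^{-q/p}$ holds once $y$ is large enough to absorb $2C$; as $q/p > 1$, the function $h(y) := (1+y)^{-q/p}$ is a computable, computably integrable dominating function on $[0,\infty)$. In case (2), with $r = 2$, part (1) of Theorem \ref{thm:tails} gives a bound of the form $(2C/y^{1/p}) \exp(-y^{2/p}/(4\sigma^2))$ for large $y$, which decays faster than any polynomial in $y$; hence $\psi(y) \leq (1+y^2)^{-1}$ for $y$ sufficiently large, and I take $h(y) = (1+y^2)^{-1}$.

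The main subtlety is that the constant $C$ appearing in Theorem \ref{thm:tails} is only asserted to exist and is not explicitly computable from the parameters of $X$. The trick is to dominate by a function whose decay exponent is strictly weaker than the true tail exponent: the extra room absorbs any unknown multiplicative constant for sufficiently large $y$, which is all that Lemma \ref{lm:comp.int.dist} requires.
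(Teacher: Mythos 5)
Your proof is correct and follows essentially the same strategy as the paper's: reduce to computing $\psi(y) = P[|X|^p \geq y] = 1 - F(y^{1/p}) + F(-y^{1/p})$ via Theorem \ref{thm:comp.inv}, then produce a computable, computably integrable dominating function and invoke Lemma \ref{lm:comp.int.dist}.

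The one place you diverge is in how you deal with the unknown constant $C$ from Theorem \ref{thm:tails} in case (1). The paper takes $C$ to be a fixed positive integer and builds it directly into the dominating function (defining $h(x) = C$ for $x < 1$ and $Cx^{-r/p}$ for $x \geq 1$); since any particular integer is a computable constant, this is legitimate for the non-uniform statement being proved, even though $C$ is not effectively determinable from the parameters. You instead choose a rational $q$ strictly between $p$ and $r$ and dominate by $(1+y)^{-q/p}$, letting the strictly slower decay absorb the unknown multiplicative constant. Both arguments work, and your version has the small advantage of not needing to observe that $C$ can be taken to be an integer; it also makes the source of the slack explicit. You also handle $p = 0$ explicitly, a degenerate case the paper's formula $\psi(x) = 1 - F(x^{1/p}) + F(-x^{1/p})$ silently excludes. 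These are minor stylistic refinements rather than a genuinely different route.
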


\begin{proof}
Let $F$ be the distribution function of $X$.  
Let $p$ be a computable nonnegative real.
For each $x \geq 0$, let $\psi(x) = m(\{t\ :\ |X(t)|^p \geq x\})$.   

We first claim that $\psi$ is computable.  Since the parameters of the stable distribution of $X$ are computable, 
the characteristic function of $F$ is computable and so $F$ is computable by Theorem \ref{thm:comp.inv}. 
Since $F$ is continuous, for each $a \in \R$, $m(\{t\ :\ X(t) = a\}) = 0$.  
Thus, $\psi(x) = 1 - F(x^{1/p}) + F(-x^{1/p})$ for each $x \geq 0$.  
Since $p$ is computable, it follows that $\psi$ is computable. 

We now treat the case $r =2$.  We begin by showing there is a computable and computably integrable $h : [0,\infty) \rightarrow \R$ so that $h \geq \psi$. By Theorem \ref{thm:tails}, there exist a positive integer $C$ and a positive rational number $q$ so that $P[|X| > x] < C\exp(-qx^2)x^{-1}$ for all sufficiently large $x$.
Thus, for all sufficiently large $x$, $P[|X|^p > x] < C\exp(-q x^{-2/p})x^{-1/p}$.  
However, $\exp(q x^{2/p}) > x^{2 - 1/p}$ for all sufficiently large $x$. 
Thus, there is a positive integer $N_0$ so that 
$\psi(x) \leq x^{-2}$ for all $x \geq N_0$.  For all $x \geq 0$, let 
\[
h(x) = \left\{\begin{array}{cc}
1 & x < N_0\\
N_0^2 x^{-2} & x \geq N_0\\
\end{array}
\right.
\]
Thus, $h$ is computable and computably integrable, and $\psi \leq h$. 
It now follows from Lemma \ref{lm:comp.int.dist} that $E[|X|^p]$ is computable.  

Let us now consider the case $r < 2$.  Assume $p < r$. 
Again, we proceed by showing there is a computable and computably integrable $h : [0,\infty) \rightarrow \R$ so that $h \geq \psi$. It follows from Theorem \ref{thm:tails} that there is an integer $C > 1$ so that $\psi(x) \leq Cx^{-r/p}$ for all sufficiently large 
$x$.  For all $x \geq 0$, let 
\[
h(x) = \left\{ 
\begin{array}{cc}
C & x < 1 \\
Cx^{-r/p} & x \geq 1\\
\end{array}
\right.
\]
Since $r > p$, $h$ is computably integrable.  It again follows from Lemma \ref{lm:comp.int.dist} that $E[|X|^p]$ is computable. 
\end{proof}

We note that if $r <2$, and if $X$ is $r$-stable, then $E[|X|^p] = \infty$ whenever $p \geq r$. 

From Theorem \ref{thm:comp.stable.moment}, we immediately obtain the following which will be used 
in the next section to construct an embedding.

\begin{corollary}\label{cor:Lp.R.stable}
Suppose $p,r$ are computable reals so that $1 \leq p < r \leq 2$.  
Let $\mu$ be a stable distribution whose parameters are computable and whose stability index is $r$.  
Then, 
$L^p((0,1); \R)$ contains a computable vector whose distribution is $\mu$.
\end{corollary}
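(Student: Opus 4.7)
The plan is to invoke Theorem \ref{thm:comp.stable} to obtain a computable random variable $g : (0,1) \to \R$ whose distribution is $\mu$, and then to verify that $g$ is a computable vector of the standard presentation of $L^p((0,1); \R)$. Since $p < r \leq 2$, Theorem \ref{thm:comp.stable.moment} gives that $\|g\|_p^p = E[|g|^p]$ is finite and computable, so $g$ indeed lies in $L^p((0,1); \R)$ with the prescribed distribution. The substance of the proof is then to produce, effectively in $k$, a rational linear combination $h_k$ of indicator functions of dyadic subintervals of $(0,1)$ satisfying $\|g - h_k\|_p < 2^{-k}$.

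First I would truncate. Let $\phi_N(y) = \max(-N, \min(N, y))$ and $g_N = \phi_N \circ g$. For $p \geq 1$ and $y \geq N$, convexity of $t \mapsto t^p$ gives $(y - N)^p \leq y^p - N^p$, so
\[
\|g - g_N\|_p^p \leq \int_{|g| > N}\bigl(|g|^p - N^p\bigr)\, dm = E[|g|^p] - E[|g_N|^p].
\]
Here $E[|g|^p]$ is computable by Theorem \ref{thm:comp.stable.moment}, and writing $E[|g_N|^p] = \int_0^{N^p} P[|g|^p > x]\, dx$ makes it computable uniformly in $N$ via the computable cumulative distribution function of $\mu$ (Corollary \ref{cor:comp.param}) together with Lemma \ref{lm:comp.int.func}. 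Because the difference tends to $0$ as $N \to \infty$, an effective search locates $N \in \N$ with $\|g - g_N\|_p < 2^{-(k+1)}$.

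With $N$ fixed, I would approximate the bounded monotonic computable function $g_N$ by a dyadic step function. Partition $(0,1)$ into intervals $I_j = [j 2^{-n}, (j+1) 2^{-n})$ for $j < 2^n$ and set $h = \sum_j a_j \chi_{I_j}$, where $a_j \in \F_\Q$ approximates $g_N\bigl((j + \tfrac{1}{2}) 2^{-n}\bigr)$ to within $2^{-m}$. Since $g_N$ is monotonic, the oscillations $\omega_j = g_N((j+1)2^{-n}) - g_N(j 2^{-n})$ satisfy $\sum_j \omega_j \leq 2N$, and from $\omega_j \leq 2N$ we obtain $\sum_j \omega_j^p \leq (2N)^p$ for $p \geq 1$. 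A direct estimate then yields
\[
\|g_N - h\|_p^p \leq 2^{p-1}\bigl(2^{-n}(2N)^p + 2^{-mp}\bigr),
\]
which is less than $2^{-p(k+1)}$ for suitably chosen computable $n$ and $m$. Combining the two bounds gives $\|g - h\|_p < 2^{-k}$, and $h$ is a rational vector of the standard presentation.

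The main obstacle is the effective tail control in the truncation step: selecting $N$ demands that both $E[|g|^p]$ and the partial moment $E[|g_N|^p]$ be uniformly computable, which in turn rests on the full effective apparatus assembled earlier (computable density and c.d.f.\ from Corollary \ref{cor:comp.param}, computable absolute moments from Theorem \ref{thm:comp.stable.moment}, and the integration lemmas of Section \ref{sec:prelim::subsec:int}). Once these tools are invoked, the remaining step-function approximation and error combination are routine.
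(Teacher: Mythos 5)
Your argument is correct and follows the same route the paper intends: obtain a computable random variable $g$ with distribution $\mu$ from Theorem \ref{thm:comp.stable}, get that $\norm{g}_p^p = E[|g|^p]$ is computable from Theorem \ref{thm:comp.stable.moment}, and then conclude $g$ is a computable vector of the standard presentation of $L^p((0,1);\R)$, a step the paper records as immediate and your write-up usefully unpacks. One small point worth flagging: your step-function estimate leans on $g_N$ (hence $g$) being monotone, which is not asserted in the \emph{statement} of Theorem \ref{thm:comp.stable} but does hold for the $g$ produced in its \emph{proof}, namely the generalized inverse of the c.d.f.\ furnished by Proposition \ref{prop:cdf.rv}; you should cite that construction explicitly when invoking monotonicity.
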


We will also require a complex version of the conclusion in Corollary \ref{cor:Lp.R.stable}.

\begin{proposition}\label{prop:Lp.C.stable}
Suppose $p,r$ are computable reals so that $1 \leq p < r < 2$.  Then, 
$L^p((0,1)^3; \C)$ contains a computable vector whose distribution is isotropic and strictly $r$-stable.
\end{proposition}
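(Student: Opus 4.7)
The plan is to use the sub-Gaussian representation of isotropic strictly $r$-stable random variables. By Theorem \ref{thm:comp.stable}, applied with computable parameters (index $r/2 \in (0,1)$, scale $1$, skewness $1$, shift $0$) we realize a totally right-skewed strictly $(r/2)$-stable random variable $A : (0,1) \to [0,\infty)$ as a computable function; such an $A$ is almost surely nonnegative. Applying Theorem \ref{thm:comp.stable} again with parameters $(2,1,0,0)$, we obtain a computable standard Gaussian $G : (0,1) \to \R$. Let $\pi_j : (0,1)^3 \to (0,1)$ denote the coordinate projections, and define
\[
Z(x_1,x_2,x_3) \;=\; \sqrt{A(x_1)}\,\bigl(G(x_2) + iG(x_3)\bigr),
\]
a computable $\C$-valued function on $(0,1)^3$.

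The next step is to verify that $Z \in L^p((0,1)^3;\C)$. Since the three coordinate factors are independent,
\[
E[|Z|^p] \;=\; E[A^{p/2}] \cdot E\bigl[(G^2 + G^2)^{p/2}\bigr],
\]
where in the second factor the two copies of $G$ are evaluated at independent arguments. The Gaussian factor is a finite, computable number by Theorem \ref{thm:comp.stable.moment}(2). The factor $E[A^{p/2}]$ is finite and computable by Theorem \ref{thm:comp.stable.moment}(1), using the crucial inequality $p/2 < r/2$ which is exactly the hypothesis $p < r$. Then, to promote $Z$ to a computable vector of the standard presentation of $L^p((0,1)^3;\C)$, I effectively approximate $Z$ in $L^p$-norm by rational linear combinations of indicator functions of dyadic subcubes: given $k$, one first computes $N$ so that $\|Z \cdot \chi_{\{|Z|>N\}}\|_p < 2^{-(k+1)}$ (using the computable tail of $A$ from the proof of Theorem \ref{thm:comp.stable.moment} together with Gaussian tail bounds from Theorem \ref{thm:tails}), and then uses a computable modulus of continuity of $Z$ on the bounded region to find a simple-function approximation within $2^{-(k+1)}$ in the $L^p$-norm, the $L^p$-norm on bounded simple functions being computable from the presentation.

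Finally, I verify that the distribution of $Z$ is isotropic and strictly $r$-stable. Isotropy is immediate from the rotational invariance of the planar standard Gaussian: for unimodular $\zeta = e^{i\alpha}$, the pair $(\cos\alpha\,G_1 - \sin\alpha\,G_2,\ \sin\alpha\,G_1 + \cos\alpha\,G_2)$ has the same distribution as $(G_1,G_2)$, so $\zeta Z =_d Z$. Strict $r$-stability follows from computing the characteristic function: conditioning on $A$ and using the form of the Gaussian characteristic function,
\[
\phi_Z(u) \;=\; E\bigl[\exp(-|u|^2 A/2)\bigr] \;=\; \exp(-c\,|u|^r)
\]
for some $c>0$, since the Laplace transform of a totally right-skewed strictly $(r/2)$-stable random variable at $s \geq 0$ has the form $\exp(-c s^{r/2})$. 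This is precisely the characteristic function of an isotropic strictly $r$-stable complex random variable (the forms referenced in Chapter 2 of \cite{Samorodnitsky.Taqqu.1994}), so by uniqueness of characteristic functions $Z$ has the desired distribution.

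The main obstacle is the middle step: turning the function $Z$, which is a computable pointwise combination of the three computable random variables, into a computable vector of $L^p((0,1)^3;\C)$ in the precise sense of the standard presentation (distinguished vectors being indicators of dyadic boxes). The routine parts are the construction and the characteristic function calculation; the computable-analytic content is concentrated in the effective truncation, which relies on the computable tail and moment estimates provided by Theorems \ref{thm:comp.stable.moment} and \ref{thm:tails}.
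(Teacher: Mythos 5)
Your proposal follows the paper's proof essentially step for step: the sub-Gaussian representation $Z = \sqrt{A}\,(G_1 + iG_2)$ with $A$ a positive strictly $(r/2)$-stable variable produced via Theorem~\ref{thm:comp.stable}, the Tonelli split of $E[|Z|^p]$ into the $A$-factor and the Gaussian factor, computability of $E[A^{p/2}]$ from Theorem~\ref{thm:comp.stable.moment}(1) using $p/2 < r/2$, and then the observation that a computable function with computable $L^p$-norm is a computable vector of the standard presentation. (The paper's phrase ``symmetric $r/2$-stable with skewness parameter $1$'' is a slip — you read it correctly as totally right-skewed.) The only deviations are cosmetic (you take scale $1$ rather than the paper's $\tau = \cos(\pi r/4)^{2/r}$ and verify isotropy and stability directly via the characteristic function instead of citing Corollary~2.6.4 of Samorodnitsky--Taqqu), plus one small imprecision: you cite Theorem~\ref{thm:comp.stable.moment}(2) to make $E\bigl[(G_1^2+G_2^2)^{p/2}\bigr]$ computable, but that theorem is stated for a single real stable random variable, not for the modulus of a pair of independent Gaussians. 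The paper instead gives a direct tail estimate $\psi(x) \leq 2\, m(\{t : |G(t)| \geq 2^{-1/2}x^{1/p}\})$ and applies Theorem~\ref{thm:tails} together with Lemma~\ref{lm:comp.int.dist}; you clearly have the same idea in hand (you invoke Gaussian tail bounds in the truncation paragraph), so this is a citation to tidy rather than a real gap.
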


\begin{proof}
Let $\tau = \cos(\frac{\pi r}{4})^{2/r}$.  By Theorem \ref{thm:comp.stable}, there is a 
computable $A : (0,1) \rightarrow \R$ so that the distribution of $A$ is symmetric $r/2$-stable with 
skewness parameter $1$ and scale $\tau$.  By the same theorem, there is a normally distributed and 
computable $G : (0,1) \rightarrow \R$ whose mean is $0$. 

For all $t_1, t_2, t_3 \in (0,1)$, let :
\begin{eqnarray*}
\overline{A}(t_1, t_2, t_3) & = & A(t_1)\\
\overline{G}_1(t_1, t_2, t_3) & = & G(t_2)\\
\overline{G}_2(t_1, t_2, t_3) & = & G(t_3)
\end{eqnarray*}
Then, let $X = \overline{A}^{1/2}(\overline{G}_1 + i \overline{G}_2)$.  

By Corollary 2.6.4 of \cite{Samorodnitsky.Taqqu.1994}, the distribution of $X$ is isotropic $r$-stable and symmetric.  Since $X$ is a computable function from $(0,1)^3$ into $\C$, it now suffices to show
that $\norm{X}_p$ is computable.  By Tonelli's Theorem, 
\[
E[|X|^p] = E[|A|^{p/2}]E[|\overline{G}_1 + i \overline{G}_2|^p].
\]
By Theorem \ref{thm:comp.stable.moment}, $E[|A|^{p/2}]$ is computable.  
So, it remains to show that $E[|\overline{G}_1 + i \overline{G}_2|^p]$ is computable.  
For all $x \geq 0$, let  
$\psi(x) = m(\{\vec{t}\ :\ |\overline{G}_1(\vec{t}) + i \overline{G}_2(\vec{t})|^p \geq x\})$.  

We claim there are an integer $C$ and a positive rational number $q$ so that 
$\psi(x) \leq \exp(-q x^{2/p})$ for all sufficiently large $x$. 
To demonstrate this, we first note that 
\begin{eqnarray*}
\{\vec{t}\ :\ |\overline{G}_1(\vec{t}) + i \overline{G}_2(\vec{t})| \geq x \} & = & \{\vec{t}\ :\ \overline{G}_1(\vec{t})^2 + i \overline{G}_2(\vec{t})^2 \geq x^{2/p} \}\\
& \subseteq & \{\vec{t}\ :\ \overline{G}_1(\vec{t})^2 \geq x^{2/p}/2 \} \cup \{\vec{t}\ :\ \overline{G}_2(\vec{t})^2 \geq x^{2/p} \}\\
\end{eqnarray*}
Since $\overline{G}_j$ and $G$ are identically distributed, 
$\psi(x) \leq 2 m(\{t\ :\ |G(t)| \geq 2^{-1/2} x^{1/p}\})$.  
The conclusion now follows from Theorem \ref{thm:tails}.  

Thus, by Lemma \ref{lm:comp.int.dist}, 
$E[|\overline{G}_1 + i \overline{G}_2|^p]$ is computable.  Hence, 
$E[|f|^p]$ is computable.  Since $p$ is computable, it follows that $\norm{f}_p$ is computable.  
Since $f$ and $\norm{f}_p$ are computable, $f$ is a computable vector of 
$L^p((0,1)^3; \C)$.  
\end{proof}

\section{A computable embedding result}\label{sec:comp.embed}

We now apply the material in the previous section by proving the following.

\begin{theorem}\label{thm:comp.embed}
Suppose $p,r$ are computable reals so that $1 \leq p < r \leq 2$.  
Then, there is a computable isometric embedding of $\ell^r$ into $L^p[0,1]$.
\end{theorem}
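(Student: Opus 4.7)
The plan is to effectivize the classical Bretagnolle--Dacunha-Castelle--Krivine embedding. I would produce an i.i.d.\ sequence $\{X_n\}_{n \in \N}$ of symmetric $r$-stable random variables with $\norm{X_n}_p = 1$, uniformly computable in the standard presentation of $L^p((0,1)^\omega)$, and then send the standard basis vector $e_n$ of $\ell^r$ to $X_n$. Since symmetric stables are strictly stable, iterating the scaling rule (Theorem \ref{thm:index}) and using symmetry to absorb signs of the coefficients gives, for any scalars $\alpha_0, \ldots, \alpha_N$,
\[
\sum_{n \leq N} \alpha_n X_n \;=_d\; \left(\sum_{n \leq N} |\alpha_n|^r\right)^{1/r} X_0,
\]
so $\norm{\sum_n \alpha_n X_n}_p = (\sum_n |\alpha_n|^r)^{1/r}$; hence $e_n \mapsto X_n$ extends by continuity to an isometric embedding $T : \ell^r \to L^p$.

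To produce the sequence $\{X_n\}_{n \in \N}$ effectively, I would first apply Corollary \ref{cor:Lp.R.stable} to obtain a computable $Y \in L^p((0,1); \R)$ whose distribution is symmetric $r$-stable with some computable positive scale (for instance, scale $1$). Since $1 \leq p < r \leq 2$, Theorem \ref{thm:comp.stable.moment} yields that $\norm{Y}_p = (E|Y|^p)^{1/p}$ is computable and strictly positive, so setting $X := Y / \norm{Y}_p$ gives a computable unit-norm symmetric $r$-stable vector of $L^p((0,1); \R)$. For each $n \in \N$, define $X_n(t_0, t_1, \ldots) := X(t_n) \in L^p((0,1)^\omega)$; pullback along the measure-preserving $n$-th coordinate projection carries $\chi_{I_k}$ to the $\langle n, k \rangle$-th distinguished vector of the standard presentation of $L^p((0,1)^\omega)$ and preserves the $L^p$ norm, so the sequence $\{X_n\}_{n \in \N}$ is uniformly computable.

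The $X_n$ are independent copies of $X$, so the distributional identity above holds and the extension $T : \ell^r \to L^p((0,1)^\omega)$ of $e_n \mapsto X_n$ is an isometry. Because $T$ is $1$-Lipschitz and sends the computable linearly dense sequence $\{e_n\}_{n \in \N}$ to the uniformly computable $\{X_n\}_{n \in \N}$, $T$ is a computable map between the two standard presentations. To land in $L^p[0,1]$ rather than $L^p((0,1)^\omega)$, I would compose with a computable isometric isomorphism $\Phi : L^p((0,1)^\omega) \to L^p[0,1]$ obtained from a measure-preserving bijection of the underlying probability spaces (e.g.\ via interleaving of binary digits) that transports distinguished cylinders to finite unions of dyadic subintervals in a uniformly computable way. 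In the complex case one proceeds identically, using Proposition \ref{prop:Lp.C.stable} to produce an isotropic $r$-stable $X$ in place of $Y$.

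The conceptual heart is the classical stable scaling identity; the effective content is packaged into Corollary \ref{cor:Lp.R.stable} and Theorem \ref{thm:comp.stable.moment}, so the main remaining technicality is constructing and verifying the computable measure-space isomorphism $\Phi$, which is bookkeeping-heavy but not conceptually subtle.
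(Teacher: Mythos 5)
Your proposal follows essentially the same route as the paper: construct a computable, normalized, independent sequence of (strictly) $r$-stable random variables on $(0,1)^\omega$ by pulling back a single computable stable vector (from Corollary \ref{cor:Lp.R.stable} or Proposition \ref{prop:Lp.C.stable}) along the coordinate projections, use the stable scaling identity to get the $\ell^r$-norm formula, extend by continuity, and finally compose with a computable isometric isomorphism $L^p((0,1)^\omega) \to L^p[0,1]$. The paper packages the first two steps as Lemma \ref{lm:stable.indep} and Lemma \ref{lm:stable.form.disj}, and for the final isomorphism cites Theorem 1.1 of \cite{Clanin.McNicholl.Stull.2019} rather than constructing it; your sketch of that isomorphism via binary-digit interleaving is in fact sound (fixing finitely many, possibly non-consecutive, binary digits still yields a finite union of dyadic intervals, computably), so this is only a cosmetic difference.
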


Theorem \ref{thm:comp.embed} is an effective version of a famous theorem due to 
Bretagnolle, Dacuhna-Castelle, and Krivine \cite{Bretagnolle.Dacunha-Castelle.Krivine.1965}.
We divide the main part of the proof into the following two lemmas.

\begin{lemma}\label{lm:stable.form.disj}
Suppose $1 \leq p < r \leq 2$. 
\begin{enumerate}
	\item If $\{g_n\}_{n \in \N}$ is an independent family of symmetric and strictly $r$-stable random variables in 
$L^p(\Omega; \R)$, then $\{g_n\}_{n \in \N}$ is $L^r$-formally disjointly supported.\label{lm:stable.form.disj::itm:R}

	\item If $\{g_n\}_{n \in \N}$ is an independent family of isotropic and strictly $r$-stable complex random variables in $L^p(\Omega; \C)$, then $\{g_n\}_{n \in \N}$ is $L^r$-formally disjointly supported.\label{lm:stable.form.disj::itm:C}
\end{enumerate}
\end{lemma}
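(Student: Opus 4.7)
The plan is to reduce the computation of $\norm{\sum_j \alpha_j g_j}_p$ to one involving a single ``standard'' $r$-stable random variable, exploiting the fact that symmetric (resp.\ isotropic) strictly $r$-stable distributions are determined up to scaling by a single scale parameter $\sigma \geq 0$ once $r$ is fixed.

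Fix a symmetric (resp.\ isotropic) strictly $r$-stable $h$ of scale $1$, and write $g_j =_d \sigma_j h$ for the scale $\sigma_j \geq 0$ of $g_j$. For any scalar $\alpha$ one has $\alpha g_j =_d |\alpha| g_j$ (in the real case by symmetry; in the complex case by isotropy, since $\alpha/|\alpha|$ is unimodular when $\alpha \neq 0$), so $\alpha_j g_j =_d |\alpha_j|\sigma_j h$. The core step is an induction on $n$ showing
\[
\sum_{j = 1}^n \alpha_j g_j \;=_d\; \Bigl(\sum_{j = 1}^n |\alpha_j|^r \sigma_j^r\Bigr)^{1/r} h.
\]
The base case $n=1$ is immediate. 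For the inductive step, let $S_n$ denote the partial sum and $\tau_n = (\sum_{j \leq n} |\alpha_j|^r \sigma_j^r)^{1/r}$; since $S_n$ is independent of $g_{n+1}$, the pair $(S_n, \alpha_{n+1} g_{n+1})$ is distributed as $(\tau_n h_1, |\alpha_{n+1}|\sigma_{n+1} h_2)$ for independent copies $h_1, h_2$ of $h$. Adding coordinates and invoking strict stability---Theorem~\ref{thm:index} in the real case, Theorem~\ref{thm:stable.complex.index} in the complex case, with shift $D = 0$ thanks to the strict stability hypothesis---yields $S_{n+1} =_d (\tau_n^r + |\alpha_{n+1}|^r \sigma_{n+1}^r)^{1/r} h$.

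Since identically distributed random variables have equal $L^p$-norms, the displayed identity together with $\norm{g_j}_p = \sigma_j \norm{h}_p$ gives
\[
\Bigl\|\sum_j \alpha_j g_j\Bigr\|_p^r \;=\; \sum_j |\alpha_j|^r \sigma_j^r \norm{h}_p^r \;=\; \sum_j |\alpha_j|^r \norm{g_j}_p^r,
\]
which is the required identity. If every $\sigma_j$ vanishes the claim is trivial; otherwise $\norm{h}_p = \norm{g_j}_p / \sigma_j$ is finite for any $j$ with $\sigma_j > 0$, because $g_j \in L^p$. The main technical input is the parametrization of symmetric (resp.\ isotropic) strictly $r$-stable distributions by the scale $\sigma$ alone: in the real setting this is immediate from Theorem~\ref{thm:cf}, while in the complex setting it is standard (see Chapter~2 of \cite{Samorodnitsky.Taqqu.1994}). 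A minor point of care in the real case $r=1$ is to note that a symmetric $1$-stable variable is automatically strictly stable (its skewness vanishes), so the shift term in the inductive step really is zero.
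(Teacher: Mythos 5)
Your proof is correct and follows essentially the same route as the paper's: use symmetry/isotropy to replace $\alpha_j g_j$ by $|\alpha_j| g_j$ (up to distribution), use independence plus strict $r$-stability to collapse the sum to a single $r$-stable variable, and conclude by noting that identically distributed random variables have equal $L^p$-norms. The one substantive difference is that you carefully track the individual scale parameters $\sigma_j$ and prove the claim for a family of \emph{possibly non-identically-distributed} strictly $r$-stable variables, whereas the paper's argument passes directly from $h_0 = \sum_j a_j g_j$ to $h_1 = (\sum_j |a_j|^r)^{1/r} g_0$ and invokes $\norm{g_0}_p = \norm{g_j}_p$ at the end, which tacitly assumes the $g_j$ are all copies of $g_0$. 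Since the lemma as stated does not require the family to be identically distributed (and that hypothesis is what actually holds in the application, Lemma \ref{lm:stable.indep}), your version is a slight but genuine tightening. Two small remarks: your aside about symmetric $1$-stable variables being automatically strictly stable is unnecessary here, since strict stability is already part of the hypothesis; and in the inductive step one should technically note that the stability identity as stated requires $A,B>0$, so the cases $\tau_n = 0$ or $|\alpha_{n+1}|\sigma_{n+1}=0$ need the trivial separate check you only gesture at for the all-zero case --- this is easily filled in.
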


\begin{proof}
(\ref{lm:stable.form.disj::itm:R}): 
Let $M \in \N$, and suppose $a_j \in \R$ for each $j \leq M$.  Let:
\begin{eqnarray*}
h_0 & = & \sum_{j = 0}^M a_j g_j\\
h_1 & = & \left( \sum_{j = 0}^M |a_j|^r\right)^{1/r} g_0.
\end{eqnarray*}
Since $g_j$ is symmetric, $a_j g_j$ and $|a_j| g_j$ have the same distribution.  
Since $g_0$ is strictly $r$-stable and $\{g_0, \ldots, g_M\}$ is independent, it follows that $h_0$ and $h_1$ have the same distribution.  
Thus, $\norm{h_0}_p = \norm{h_1}_p$.  Therefore, 
\begin{eqnarray*}
\norm{\sum_{j = 0}^M a_j g_j }_p^r & = & \norm{h_0}_p^r\\
& = & \norm{h_1}_p^r \\
& = & \sum_{j = 0}^M |a_j|^r \norm{g_0}_p^r\\
& = & \sum_{j = 0}^M |a_j|^r \norm{g_j}_p^r.
\end{eqnarray*}

The proof of (\ref{lm:stable.form.disj::itm:C}) is similar.
\end{proof}

\begin{lemma}\label{lm:stable.indep}
Suppose $1 \leq p < r \leq 2$. 
\begin{enumerate}
	\item There is a computable sequence $\{g_n\}_{n \in \N}$ of $L^p((0,1)^\omega; \R)$ 
so that each $g_n$ is symmetric strictly $r$-stable and so that 
$\{g_n\}_{n \in \N}$ is independent.  \label{lm:stable.indep::itm:R}

	\item There is a computable sequence $\{g_n\}_{n \in \N}$ of $L^p((0,1)^\omega; \C)$ 
so that each $g_n$ is isotropic strictly $r$-stable and so that 
$\{g_n\}_{n \in \N}$ is independent.  \label{lm:stable.indep::itm:C}
\end{enumerate}
\end{lemma}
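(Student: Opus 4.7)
The plan is to construct a single computable random variable with the desired distribution on $(0,1)$ (or on $(0,1)^3$ in the complex case) using the results from the previous section, and then replicate it using disjoint blocks of coordinates of $(0,1)^\omega$. Because the coordinate projections on $(0,1)^\omega$ are independent uniform random variables, this automatically yields an independent family whose marginal distribution matches that of the base random variable.

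For part (\ref{lm:stable.indep::itm:R}), I invoke Corollary \ref{cor:Lp.R.stable} applied to the symmetric strictly $r$-stable distribution with scale $1$ and skewness and shift parameters both $0$; these parameters are trivially computable, so the corollary supplies a computable $f \in L^p((0,1); \R)$ with this distribution, and I set $g_n(x_0, x_1, \ldots) = f(x_n)$.  For part (\ref{lm:stable.indep::itm:C}) with $r < 2$, Proposition \ref{prop:Lp.C.stable} gives a computable isotropic strictly $r$-stable $X \in L^p((0,1)^3; \C)$, and I set $g_n(x_0, x_1, \ldots) = X(x_{3n}, x_{3n+1}, x_{3n+2})$.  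The boundary case $r = 2$ must be handled separately because Proposition \ref{prop:Lp.C.stable} excludes it: take two independent standard real Gaussians $G_1, G_2$ on $(0,1)$ (computable by Theorem \ref{thm:comp.stable}) and form $X(x_1, x_2) = G_1(x_1) + iG_2(x_2)$, which is readily seen to be isotropic and strictly $2$-stable.  In every case, distinct $g_n$'s depend on disjoint coordinate blocks, so $\{g_n\}_{n \in \N}$ is automatically independent with the required marginal.

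The main technical step is to verify that $\{g_n\}_{n \in \N}$ is a computable sequence of vectors of $L^p((0,1)^\omega)$ under its standard presentation.  The key observation is that the distinguished vectors of the standard presentation are precisely indicator functions of cylinder sets $(0,1)^n \times I_k \times (0,1)^\omega$, which is exactly the shape needed to lift an approximation of $f$ on $(0,1)$ to an approximation of $g_n$ on $(0,1)^\omega$.  Given $m \in \N$, compute a rational combination $u = \sum_k \alpha_k \chi_{I_k}$ of dyadic indicator functions with $\norm{u - f}_p < 2^{-m}$; its lift $\tilde u = \sum_k \alpha_k \chi_{(0,1)^n \times I_k \times (0,1)^\omega}$ is a rational vector of $L^p((0,1)^\omega)^\#$ and, by Fubini, satisfies $\norm{\tilde u - g_n}_p = \norm{u - f}_p < 2^{-m}$.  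The procedure is uniform in $n$, and the complex case is analogous using blocks of three coordinates (or two, for $r = 2$).

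I do not expect a serious obstacle here: the argument is essentially bookkeeping, since the standard presentation of $L^p((0,1)^\omega)$ is tailored to precisely this sort of tensor-product construction.  The bulk of the work has already been done in establishing the computable base random variables via Corollary \ref{cor:Lp.R.stable}, Proposition \ref{prop:Lp.C.stable}, and Theorem \ref{thm:comp.stable}; what remains is merely to package these into a uniform presentation on the infinite product.
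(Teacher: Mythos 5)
Your construction is essentially the paper's: lift the computable base random variables from Corollary \ref{cor:Lp.R.stable} (real case) and Proposition \ref{prop:Lp.C.stable} (complex case) to disjoint coordinate blocks of $(0,1)^\omega$, let product structure give independence, and then verify the resulting sequence is computable in the standard presentation. Your verification of computability (explicitly lifting a step-function approximation of the base variable and invoking Fubini) is a bit more hands-on than the paper's, which instead notes that each $g_n$ is a computable function and, since $g_n$ is equidistributed with the base variable, has computable $L^p$ norm, hence is a computable vector; these are two phrasings of the same observation.

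One genuine improvement on your part: you correctly observe that Proposition \ref{prop:Lp.C.stable} only covers $1 \le p < r < 2$, whereas the lemma (and the theorem it feeds into) allows $r = 2$, and you supply the missing isotropic $2$-stable complex vector directly from two independent computable Gaussians. The paper's proof invokes Proposition \ref{prop:Lp.C.stable} without flagging that its hypothesis excludes $r = 2$, so this endpoint needs exactly the separate treatment you give it (and the computability of its $L^p$ norm is already implicit in the moment calculation for $E[|\overline{G}_1 + i\overline{G}_2|^p]$ carried out inside the proof of Proposition \ref{prop:Lp.C.stable}). Your proof is correct, and on this point slightly more careful than the original.
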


\begin{proof}
By Corollary \ref{cor:Lp.R.stable}, there is a computable vector $h_0$ of $L^p((0,1);\R)$ 
whose distribution is symmetric strictly $r$-stable.  By Proposition \ref{prop:Lp.C.stable}, 
there is a computable vector $h_1$ of $L^p((0,1)^3; \C)$ that is isotropic strictly $r$-stable.  

Let 
\[
g = \left\{
\begin{array}{cc}
h_0 & \F = \R\\
h_1 & \F = \C\\
\end{array}
\right.
\]
For each $f \in (0,1)^\omega$, let 
\[
g_n(f) = \left\{
\begin{array}{cc}
g(f(n)) & \F = \R\\
g(f(3n),f(3n+1), f(3n+2)) & \F = \C
\end{array}
\right.
\]
By construction, $g_n$ and $g$ have the same distribution, and $\{g_n\}_{n \in \N}$ is an independent sequence of random variables.  Thus, if $\F = \R$, then $g_n$ is symmetric strictly $r$-stable, and if 
$\F = \C$, then $g_n$ is isotropic $r$-stable.

We now show $\{g_n\}_{n \in \N}$ is a computable sequence of $L^p((0,1)^\omega)$.  By construction, $g_n$ is a computable function uniformly in $\N$.  
Since $g_n$ and $g$ have the same distribution, $\norm{g_n}_p = \norm{g}_p$.  
Since $\norm{h_0}_p$ and $\norm{h_1}_p$ are computable, 
$\norm{g}_p$ is computable.  Thus, $\{g_n\}_{n \in \N}$ is a computable sequence of $L^p((0,1)^\omega)$. 
\end{proof}

\begin{proof}[Proof of Theorem \ref{thm:comp.embed}]
By Lemmas \ref{lm:stable.indep} and \ref{lm:stable.form.disj}, there is a computable sequence $\{g_n\}_{n \in \N}$ of $L^p((0,1)^\omega)$ that is $L^r$-formally disjointly supported and so that each $g_n$ is nonzero.  
Let $h_n = \norm{g_n}_p^{-1} g_n$.  \\

We first demonstrate that for each $f \in \ell^r$, $\sum_{n = 0}^\infty f(n) h_n$ converges in the $L^p$-norm 
and $\norm{\sum_{n = 0}^\infty f(n) h_n}_p = \norm{f}_r$. Let $f \in \ell^r$.  If $k < m$, then since $\{g_n\}_{n \in \N}$ is $L^r$-formally disjointly supported, 
\[
\norm{\sum_{n = k}^m f(n) h_n}_p^r = \sum_{n = k}^m |f(n)|^r.
\]
Since $f \in \ell^r$, it follows that the partial sums of $\sum_{n = 0}^\infty f(n) h_n$ form a Cauchy 
sequence in the $L^p$-norm.  Since $L^p$ spaces are complete, $\sum_{n = 0}^\infty f(n) h_n$ converges in the $L^p$-norm.  It also follows that $\norm{\sum_{n = 0}^\infty f(n) h_n}_p = \norm{f}_r$.  

For each $f \in \ell^r$, let $T_1(f) = \sum_n f(n) h_n$.  By definition, $T_1$ is linear, and by what has just been shown, $T_1$ is an isometry. By definition, $T_1$ maps the standard basis of $\ell^r$ onto a computable sequence 
of $L^p((0,1)^\omega)$.  Thus, $T_1$ is computable. 

By Theorem 1.1 of \cite{Clanin.McNicholl.Stull.2019}, 
there is a computable isometric isomorphism of $L^p((0,1)^\omega)$ onto $L^p[0,1]$.
Let $T = T_2 \circ T_1$.  Thus, $T$ is a computable isometric embedding of 
$\ell^r$ into $L^p[0,1]$.  
\end{proof}

\section{Computing exponents from an index with advice}\label{sec:exp.index}

We are now ready to state and prove our main negative result.

\begin{theorem}\label{thm:no.uniform}
Suppose $I \subset [1,2]$ is an open interval.  There is no computable 
$f : \subseteq \N \rightarrow \N$ so that for all $e \in \N$, if $e$ is the index of a 
presentation of a Lebesgue space $\B$ whose index lies in $I$, then 
$f(e)$ is an index of the exponent of $\B$.  
\end{theorem}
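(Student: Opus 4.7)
\emph{Proof proposal.}
The plan is to uniformly reduce an arbitrary noncomputable c.e.\ set (for instance, the halting problem) to the computation of exponents of Lebesgue spaces whose exponents lie in $I$, thereby contradicting the existence of any such $f$.

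Since $I$ is a nonempty open subinterval of $[1,2]$, I can effectively pick rational numbers $p, r$ with $1 \leq p < r \leq 2$ and $p, r \in I$. Let $\B_0 = L^p[0,1]$ and $\B_1 = \ell^r$, equipped with their standard (computable) presentations $\B_0^\#$ and $\B_1^\#$; each is a Lebesgue space of dimension at least $2$, with exponent $p$ and $r$ respectively. Because $1 \leq p < r \leq 2$ and both $p, r$ are computable, Theorem \ref{thm:comp.embed} supplies a computable isometric embedding of $\B_1^\#$ into $\B_0^\#$.

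Now fix a noncomputable c.e.\ set $A$ and apply Proposition \ref{prop:pairs.banach} to these data. This produces a uniformly computable sequence of presentations $(\mathcal{A}_e^\#)_{e \in \N}$, where $\mathcal{A}_e = \B_0$ if $e \in A$ and $\mathcal{A}_e = \B_1$ otherwise. Let $g : \N \rightarrow \N$ be a computable function such that $g(e)$ is an index of $\mathcal{A}_e^\#$. By construction, the exponent of $\mathcal{A}_e$ equals $p$ when $e \in A$ and $r$ otherwise, so in either case it lies in $I$ and the dimension is at least $2$.

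Suppose for contradiction that a computable $f$ as in the theorem exists. Then $f(g(e))$ is defined for every $e$ and is an index of the exponent of $\mathcal{A}_e$. Using this index one can compute a rational $q_e$ with $|q_e - \mbox{(exponent of }\mathcal{A}_e)| < (r-p)/4$, and then decide membership in $A$ by testing whether $q_e < (p+r)/2$. This would make $A$ computable, contradicting our choice of $A$. The main conceptual input is the existence of a computable isometric embedding from a Lebesgue space with higher exponent into one with lower exponent, which is precisely the content of the earlier Theorem \ref{thm:comp.embed}; once that embedding is in hand, the reduction via Proposition \ref{prop:pairs.banach} and the thresholding of the computed exponent are entirely routine.
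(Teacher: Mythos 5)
Your proof is correct, and it takes a genuinely different route from the paper. Both arguments rest on the same two pillars: the computable isometric embedding of $\ell^r$ into $L^p[0,1]$ for rational $1 \leq p < r \leq 2$ in $I$ (Theorem \ref{thm:comp.embed}), and Proposition \ref{prop:pairs.banach} for building a uniformly computable family of presentations that encodes a c.e.\ set. Where you diverge is in the diagonalization: you fix a noncomputable c.e.\ set $A$ in advance, encode it into the family $(\mathcal{A}_e)_e$, and observe that a working $f$ would let you decide $A$ by thresholding the computed exponent, which is a clean many-one style reduction. The paper instead defines the relevant c.e.\ set $S$ in terms of $f$ itself (namely, those $e$ for which $f(e)$ appears to output $r_2$), builds the family to defeat $f$ on every index, and then invokes the Recursion Theorem to produce a fixed point $e_0$ with $\varphi_{g(e_0)} = \varphi_{e_0}$, so that $e_0$ is simultaneously an input to $f$ and an index of the very space being constructed. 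Your approach is somewhat more transparent and avoids the Recursion Theorem; the paper's approach avoids appealing to the existence of a noncomputable c.e.\ set as a black box, at the cost of a self-referential construction. Both are standard and valid; the conclusion and the analytic input (Theorem \ref{thm:comp.embed} plus Proposition \ref{prop:pairs.banach}) are the same.
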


\begin{proof}
By way of contradiction, suppose such a function exists.  Fix rational numbers $r_1, r_2 \in I$ so that $r_1 < r_2$.  
Let $k_0$ be the least natural number so that $r_2 - 2^{-k_0} > r_1$.  Fix a computable surjection $\nu_\Q$ of $\N$ onto $\Q$.  Let $S$ denote the set of 
all $e \in \dom(f)$ so that $k_0 \in \dom(\phi_{f(e)})$ and 
$|\nu_\Q(\phi_{f(e)}(k_0)) - r_2| < 2^{-k_0}$.  Thus, $S$ is c.e..  

Let 
\[
\B_e = \left\{
\begin{array}{cc}
L^{r_1}[0,1] & e \in S\\
\ell^{r_2} & e \not \in S\\
\end{array}
\right.
\] 
By Proposition \ref{prop:pairs.banach}, $(\B_e)_{e \in \N}$ is uniformly computably presentable.   
Thus, there is a computable $g : \N \rightarrow \N$ so that $g(e)$ indexes a presentation of 
$\B_e$ for all $e \in \ N$.  By the Recursion Theorem, there is an $e_0 \in \N$ so that 
$\phi_{g(e_0)} = \phi_{e_0}$.  Thus, $e_0$ indexes $\B_{e_0}$.  Therefore, 
$f(e_0)$ is defined.  Since $e_0 \in \dom(f)$, if $e_0 \not \in S$, then $f(e_0)$ does not index $r_2$.  
Thus, $\B_{e_0} = L^{r_1}[0,1]$, and $e_0 \in S$.  But, then $e_0$ does not index $r_1$- a contradiction.
\end{proof}

\section{The finite-dimensional case}\label{sec:fin.dim}

Although there is no uniform solution of our problem, we can take some comfort in the fact that in the finite-dimensional case, the exponent of a Lebesgue space \emph{can} be computed from one of its presentations.

\begin{theorem}\label{thm:lpn}
If $n \geq 2$, and if $\ell^p_n$ is computably presentable where $1 \leq p < \infty$, then $p$ is computable.
\end{theorem}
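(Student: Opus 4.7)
The plan is to reduce to Theorem~\ref{thm:p.geq.2} by passing to the dual space when the exponent is below $2$. If $p \geq 2$, Theorem~\ref{thm:p.geq.2} applies directly to $\B^\#$, and if $p = 1$ the conclusion is trivial, so we may assume $1 < p < 2$. Then the conjugate exponent $p' = p/(p-1)$ satisfies $p' > 2$, and the dual $\B^*$ is isometrically isomorphic to $\ell^{p'}_n$. The idea is to construct a computable presentation of $\B^*$, apply Theorem~\ref{thm:p.geq.2} there to compute $p'$, and then recover $p = p'/(p'-1)$.

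To build the dual presentation, fix by Lemma~\ref{lm:rational.lin.indep} rational vectors $w_1, \ldots, w_n$ of $\B^\#$ forming a basis of $\B$. Following the proof of Proposition~\ref{prop:fin.dim.ball}, the linear isomorphism $T: \F^n \to \B$ given by $T(a_1, \ldots, a_n) = \sum_i a_i w_i$ has a computable inverse $S$, and the coordinate maps $c_i(v) = (Sv)_i$ are then computable linear functionals on $\B$. Let $w_i^* \in \B^*$ denote the functional $v \mapsto c_i(v)$, so that $w_i^*(w_j) = \delta_{ij}$ and $\{w_1^*, \ldots, w_n^*\}$ is a basis of the $n$-dimensional space $\B^*$. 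For rational scalars $a_1, \ldots, a_n$,
\[
\left\|\sum_i a_i w_i^*\right\|_{\B^*} = \sup_{v \in \mathbf{B}}\left|\sum_i a_i c_i(v)\right|,
\]
where $\mathbf{B}$ is the closed unit ball of $\B$. The integrand is a computable function of $v$, uniformly in the rational tuple $(a_i)$, and by Proposition~\ref{prop:fin.dim.ball} the set $\mathbf{B}$ is computably compact; it is a standard fact of computable analysis that the supremum of a computable real-valued function over a computably compact set is a computable real. Hence the dual norm is computable on rational combinations of the $w_i^*$, and enumerating $w_1^*, \ldots, w_n^*$ periodically as distinguished vectors yields a computable presentation of $\B^* \cong \ell^{p'}_n$.

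Since $\dim \B^* = n \geq 2$ and $p' > 2$, Theorem~\ref{thm:p.geq.2} applied to this presentation outputs $p'$ as a computable real, whence $p = p'/(p'-1)$ is computable. The principal hurdle is establishing computability of the dual norm: this is where the finite-dimensionality enters essentially, by supplying via Proposition~\ref{prop:fin.dim.ball} a computably compact unit ball over which to evaluate dual functionals. A secondary point is the computability of the coordinate functionals $c_i$, which follows from the computability of the inverse map $S$ furnished by the proof of Proposition~\ref{prop:fin.dim.ball}.
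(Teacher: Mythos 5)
Your proof is correct and takes a genuinely different route from the paper's. For $1<p<2$ you construct a computable presentation of the dual $\ell^{p'}_n$ with $p'>2$ --- using that computable compactness of the unit ball (Proposition~\ref{prop:fin.dim.ball}) renders the dual norm computable as a supremum over a computably compact set, and that the inverse coordinate map is computable as asserted in the proof of that proposition --- and then invoke Theorem~\ref{thm:p.geq.2} to obtain $p'$, hence $p=p'/(p'-1)$. The paper instead gives a self-contained direct argument: from a rational basis $v_1,\ldots,v_n$ of computable unit vectors, it enumerates rational intervals $I\subseteq(1,\infty)$ for which a finite rational cover of the closed unit ball of $\ell^\infty_{n^2}$, together with associated rational scalar tuples, certifies that no $r\in I$ can reproduce the $\ell^p$-norm values of the $v_j$; this family of intervals is shown to be c.e.\ and to cover exactly $(1,\infty)-\{p\}$, and $p$ is recovered by squeezing. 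Your duality reduction is shorter and conceptually cleaner, but it leans on Theorem~\ref{thm:p.geq.2} for a \emph{finite-dimensional} $L^{p'}$ space, and you should be aware that the paper's proof of Theorem~\ref{thm:p.geq.2} invokes the Banach--Paley non-embedding results (Theorem~\ref{thm:non.embed}), which are stated only for infinite-dimensional $L^q$ spaces. The required non-embedding does hold for $\ell^{p'}_n$ when $p'>2$ (a theorem of Dor), so your argument is sound in substance, but a reader chasing the paper's citations would find a small gap at that point; the paper's self-contained proof of the present theorem avoids any reliance on Theorem~\ref{thm:p.geq.2}, which is presumably why the author chose that route.
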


\begin{proof}
We can assume $p > 1$.  Let $(\ell^p_n)^\#$ be a computable presentation of $\ell^p_n$.  

By Lemma \ref{lm:rational.lin.indep}, there exist $n$ computable unit vectors $v_1, \ldots, v_n$ of $(\ell^p_n)^\#$ so that $\{v_1, \ldots, v_n\}$ is linearly independent.

Let $\mathcal{I}$ denote the set of all rational open intervals $I \subseteq (1, \infty)$ for which there exist
rational open balls $B_1, \ldots, B_k$ of $\ell^\infty_{n^2}$ that satisfy the following two conditions.
\begin{enumerate}
	\item $\{B_1, \ldots, B_k\}$ covers the closed unit ball of $\ell^\infty_{n^2}$.  \label{itm:balls}
	
	\item For each $j \in \{1, \ldots, k\}$, there exist rational scalars $\alpha_1^{(j)}$, $\ldots$, $\alpha_n^{(j)}$ so that 
	\[
	\norm{\sum_{t = 1}^n \alpha_t^{(j)} v_t }_p \neq \norm{\sum_{t=1}^n \alpha_t^{(j)} u_t}_r
	\]
	for all $r \in I$ and all $u_1, \ldots, u_n \in \ell^\infty_n$ so that $u_j \in B_j$ for each $j$. \label{itm:scalars}
\end{enumerate}

We first show that $\bigcup \mathcal{I} = (1, \infty) - \{p\}$.
Suppose $r \in \bigcup \mathcal{I}$.  Then, there exists $I \in \mathcal{I}$ so that 
$r \in I$.  
Thus, by definition of $\mathcal{I}$, $r \in (1, \infty)$.  Let $B_1, \ldots, B_k$ and $\{\alpha_t^{(j)}\}_{t,j}$ witness that $r \in I$.  
Since $\norm{v_j}_\infty \leq \norm{v_j}_p$, $\norm{v_j}_\infty \leq 1$.  Thus, 
by (\ref{itm:balls}), $(v_1, \ldots, v_n) \in B_j$ for some $j$.  Therefore, by (\ref{itm:scalars}), 
\[
\norm{\sum_{t = 1}^n \alpha_t^{(j)} v_t }_p \neq \norm{\sum_{t=1}^n \alpha_t^{(j)} v_t}_r.
\]
Thus, $r \neq p$.  

Conversely, suppose $r \in (1, \infty) - \{p\}$.  Let $\mathbf{B}$ denote the closed unit ball of 
$\ell^\infty_{n^2}$.  Note that $u \in \mathbf{B}$ if and only if 
there exist $u_1, \ldots, u_n \in \ell^\infty_n$ so that $u = (u_1, \ldots, u_n)$.\\

We claim that for all $\vecx \in \mathbf{B}$, there exist a rational ball 
$\boldB_{\vecx}$ of $\ell^\infty_{n^2}$, rational scalars $\beta_{\vecx, 1}$, $\ldots$, $\beta_{\vecx, n}$, and a rational open interval $I_{\vecx} \subseteq (1, \infty)$ so that 
$\vecx \in \boldB_{\vecx}$, $r \in I_{\vecx}$, and 
\[
\norm{\sum_j \beta_{\vecx, j} v_j }_p \neq \norm{\sum_j \beta_{\vecx, j} u_j }_{r'}
\]
for all $r' \in I_{\vecx}$ and all $u_1, \ldots, u_n \in \ell^\infty_n$ so that $(u_1, \ldots, u_n) \in \boldB$.

Let $\vecx \in \boldB$.  There exist $\vecx_1, \ldots, \vecx_n \in \ell^\infty_n$ so that $\vecx = (\vecx_1, \ldots, \vecx_n)$.  Since $\{v_1, \ldots, v_n\}$ is linearly independent, there is a unique linear function $T : \F^n \rightarrow \F^n$ so that $T(v_j) = \vecx_j$ for each $j \in \{1, \ldots, n\}$.
Since $r \neq p$, $T$ is not an isometry of $\ell_n^r$ into $\ell_n^p$ (otherwise, since $T$ is an endomorphism, $T$ is onto).  However, since $T$ is linear, $T$ is a continuous map of $\F^n$ into $\ell^p_n$. 
Thus, there exist rational scalars $\beta_{\vecx,1}, \ldots, \beta_{\vecx,n}$ so that 
$\norm{T(\sum_j \beta_{\vecx,j} v_j)}_p \neq \norm{\sum_j \beta_{\vecx, j} v_j}_r$.  
That is, $\norm{\sum_j \beta_{\vecx,j} \vecx_j}_p \neq \norm{\sum_j \beta_{\vecx, j} v_j}_r$. 
By continuity again, there is a rational interval $I_{\vecx} \subseteq (1, \infty)$ and a rational open ball $\boldB_{\vecx}$ as required.  

Since $\boldB$ is compact, there exist $\vecx^{(1)}, \ldots, \vecx^{(k)} \in \boldB$ so that 
$\boldB \subseteq \bigcup_j \boldB_{\vecx^{(j)}}$.  Let:
\begin{eqnarray*}
I & = & \bigcap_j I_{\vecx^{(j)}}\\
B_j & = & \boldB_{x^{(j)}}\\
\alpha_t^{(j)} & = & \beta_{\vecx^{(j)}, t}
\end{eqnarray*}
Then, $I$, $B_1$, \ldots, $B_k$, and $\{\alpha_t^{(j)}\}_{t,j}$ satisfy (\ref{itm:balls}) and (\ref{itm:scalars}).
Thus, $I \in \mathcal{I}$ and so $r \in \bigcup \mathcal{I}$.  This completes the demonstration that 
$\bigcup \mathcal{I} = (1, \infty) - \{p\}$.

We now show that $\mathcal{I}$ is c.e..  
By Proposition \ref{prop:fin.dim.ball}, condition (\ref{itm:balls}) is $\Sigma_1^0$.  Since $(\ell_n^p)^\#$ is a 
computable presentation and each $v_j$ is a rational vector of this presentation, 
it follows that condition (\ref{itm:scalars}) is $\Sigma_1^0$.  Thus, $\mathcal{I}$ is c.e..

We can now show $p$ is computable.  Fix an integer $N_0 > p$.  
Given $k \in \N$, search for a rational number $q$ and rational open intervals $I_1, \ldots, I_k \in \mathcal{I}$
so that $[q - 2^{-(k+1)}, q + 2^{-(k+1)}] \subseteq \bigcup_j I_j$, and output $q$.  
Since $\bigcup \mathcal{I} = (1, \infty) - \{p\}$, it follows that $p \in [q - 2^{-(k+1)}, q + 2^{-(k+1)}]$ and so 
$|p - q| < 2^{-k}$. 
\end{proof}

\section{Conclusion}\label{sec:conclusion}

We began our deliberations with a very natural question, ``Does the computable presentability of a Lebesgue space imply the computability of its exponent?"  We immediately see that the answer is negative for Lebesgue spaces of dimension 1.  For Lebesgue spaces of dimension at least 2, the ``obvious" answer is `yes', but this turns out to be difficult to prove even in the particular cases considered here.  
However, even though the consideration of these cases turns out to be unexpectedly difficult, their resolution and the demonstration of the non-uniformity result led to a number of surprising connections with functional analysis and probability.  

The one remaining case is that of infinite-dimensional spaces whose exponent is smaller than $2$.  
The best result so far for this case is that the exponent must be right-c.e. \cite{Brown.McNicholl.Melnikov.2019}.
This result is obtained via the modulus of uniform convexity.  There are a number of other moduli for describing geometric properties of Banach spaces, but these all lead to the exact same conclusion as does the consideration of $p$-types.  On the basis of these observations and our non-uniformity result, we conjecture that if $p$ is a right-c.e. real so that $1 \leq p \leq 2$, then every infinite-dimensional and separable $L^p$ space is computably presentable.  

\def\cprime{$'$}
\providecommand{\bysame}{\leavevmode\hbox to3em{\hrulefill}\thinspace}
\providecommand{\MR}{\relax\ifhmode\unskip\space\fi MR }
\providecommand{\MRhref}[2]{%
  \href{http://www.ams.org/mathscinet-getitem?mr=#1}{#2}
}
\providecommand{\href}[2]{#2}


\end{document}